\documentclass[reqno]{amsart}
\usepackage{tikz}
\usetikzlibrary{patterns} 
\usepackage{pgfplots}
\usepackage{amsfonts,amsmath,amsthm,amssymb,mathrsfs}
\usepackage{color}
\usepackage{amsmath,amssymb,amsfonts, bm}
\usepackage{graphicx}
\usepackage{newunicodechar}
\usepackage{xcolor}
\usepackage{geometry}
\numberwithin{equation}{section}
\usepackage[pdfstartview=FitH,colorlinks=true]{hyperref}
\hypersetup{urlcolor=red, linkcolor=blue,citecolor=red}
\allowdisplaybreaks

\theoremstyle{plain}

\usepackage{ctex}
\title{Well-posedness of inhomogeneous nonlinear wave equations in $\mathbb{R}^3$}
\newtheorem{thm}{Theorem}[section]
\newtheorem{definition}[thm]{Definition}

\newtheorem{lemma}[thm]{Lemma}
\newtheorem{remark}[thm]{Remark}
\renewcommand{\proofname}{Proof}

 \numberwithin{equation}{section}
\def \Rn {\mathbb{R}^n}

\begin{document}
\author[]
{Boyu Jiang, Jiawei Shen, Kexue Li*}

\address{Boyu Jiang
 \newline\indent
 School of Mathematics and Statistics, Xi'an Jiaotong University, Xi'an ,710049, Shaanxi,China
 \newline\indent
 Jiawei Shen
 \newline\indent
 School of Mathematics and Computational Science, Xiangtan
University, Xiangtan, 411105, Hunan, China
 \newline\indent
 Kexue Li
 \newline\indent
 School of Mathematics and Statistics, Xi'an Jiaotong University,Xi'an 710049, Shaanxi,China
 \newline\indent
 }
 \email{ jiangboyu@stu.xjtu.edu.cn; sjiawei633@xtu.edu.cn; kxli@mail.xjtu.edu.cn}
\date{}

\begin{abstract}
     This paper is devoted to the well-posedness of the inhomogeneous nonlinear wave equations. By combining Strichartz estimates with the contraction mapping principle, we establish local and global well-posedness in the function spaces $\dot{H}^1(\mathbb{R}^3)\times L^2(\mathbb{R}^3)$ and $\dot{H}^{s+1}(\mathbb{R}^3)\times \dot{H}^{s}(\mathbb{R}^3)$. The analysis is carried out in the energy-subcritical regime. As a consequence, our results extend and improve upon previous results in the literature for general nonlinear wave equations.
\end{abstract}

\keywords{Well-posedness; Strichartz estimates; Contraction mapping principle }

\maketitle

\section{Introduction}
In this article, we primarily concentrate on the well-posedness for inhomogeneous nonlinear wave equations in $H^1(\mathbb{R}^3)\times L^2(\mathbb{R}^3)$ and $\dot{H}^{s+1}(\mathbb{R}^3)\times \dot{H}^{s}(\mathbb{R}^3)$. We begin by presenting the form of the equation, 
\begin{equation}\label{1.1}
\begin{cases}
u_{tt} - \Delta u + |x|^{-b}|u|^{\alpha}u = 0, & (x,t) \in \mathbb{R}^3 \times \mathbb{R}, \\[4pt]
u(0) = \varphi(x), \quad u_t(0) = \psi(x), & x \in \mathbb{R}^3
\end{cases}.
\end{equation} 
This type of nonlinearity was first studied in \cite{Guzmán2016}, where the well-posedness of the Schrödinger equation with this nonlinear term was investigated. Motivated by this article, we investigate the well-posedness theory of inhomogeneous nonlinear wave equations. When $b=0$, the equation reduces to the standard nonlinear wave equation. Research on this equation traces back to Jörgen, who investigated the Cauchy problem for the semilinear wave equation.
\begin{equation} \label{1.2}
\begin{cases}
u_{tt} - \Delta u + f(u) = 0, & (x,t) \in \mathbb{R}^n \times \mathbb{R}, \\[4pt]
u(0) = \varphi(x), \quad u_t(0) = \psi(x), & x \in \mathbb{R}^n
\end{cases}
\end{equation}
where
\begin{equation}\label{1.3}
   f(u) = u|u|^{p-1}, \quad 1 < p \leqslant \frac{n+2}{n-2}, \quad n \geqslant 3. 
\end{equation}
where $p_c=\frac{n+2}{n-2}$ corresponds to the $H^1$ critical exponent for \eqref{1.2}.When $n \le 2$, all $1 < p < \infty$ lie in the subcritical growth range. For $n=3$ and $1<p<p_c=5$, Jörgen \cite{Jorgens1961} established the global well-posedness of smooth solutions to \eqref{1.2}, \eqref{1.3} in 1961. For higher dimensions $(3 < n \le 9)$, global well-posedness of smooth solutions was proved by Brenner and Wahl, as well as Pecher, in \cite{Pecher1976, Wolf1971}. Furthermore, Ginibre and Velo \cite{Ginibre1985, Ginibre1989} resolved the global well-posedness of energy solutions for \eqref{1.2} and \eqref{1.3} in the energy-subcritical regime. Additional well-posedness conclusions for the energy-critical regime exist, but we shall not discuss them in detail here. Recent years have witnessed further advances. For example, Shao et al. \cite{Shao2025} demonstrated the existence of blow-up solutions for the defocusing wave equation in the supercritical setting—a phenomenon previously deemed impossible in the energy-critical and energy-subcritical regimes. Additionally, for dimensions $n \ge 4$, Jendrej \cite{Jendrej2023} and coauthors demonstrated that radial ground state solutions exist for the nonlinear wave equation in the energy-critical setting. 
As is well known, the wave equation admits a unique solution in $C([0,T], L^2) \cap C^1([0,T], H^{-1})$ if and only if it satisfies the following integral equation—Duhamel's formula:
\[ 
u(x,t) = \mathcal{F}^{-1} \cos|\xi|t \,\mathcal{F}\varphi
+ \mathcal{F}^{-1} \frac{\sin|\xi|t}{|\xi|} \,\mathcal{F}\psi
+ \int_0^t \mathcal{F}^{-1} \frac{\sin|\xi|(t-\tau)}{|\xi|} \,\mathcal{F}h \,d\tau
\]

\[
\triangleq \dot{K}(t)\varphi + K(t)\psi
+ \int_0^t K(t-\tau)h(x,\tau)\,d\tau
\]

\[
\triangleq \dot{K}(t)\varphi + K(t)\psi + Gh
\quad\left(
K(t) = \mathcal{F}^{-1} \frac{\sin|\xi|t}{|\xi|} \,\mathcal{F}
\right)
\]
which holds for all $t \in [0,T]$. With further regularity assumptions on the nonlinear term and initial data—for instance, in three dimensions, $f \in C^\infty(\mathbb{R})$ and $f(0)=0$, along with $\phi, \psi \in C^\infty_c(\mathbb{R}^3)$—we have $u \in C([0,T], C^\infty_c(\mathbb{R}^3))$.\par
Moreover, the nonlinear wave equation admits the familiar energy conservation law, 
\begin{equation}
    E(u(t)) = \frac{1}{2}\int |u_t(t,x)|^2 dx + \frac{1}{2}\int |\nabla u(t,x)|^2 dx + \frac{1}{p+1}\int |u(t,x)|^{p+1} dx=E(u(0))
\end{equation}
We now present our main result:
\begin{thm} \label{local well-posedness L^2}
    Let $0 < \alpha < \frac{4-2b}{3}$ and $0 < b < 2$. Then, for any initial data $(\phi, \psi) \in \dot{H}^{1}(\mathbb{R}^3) \times L^{2}(\mathbb{R}^3)$, equation \eqref{1.1} is locally well-posed in the space $\dot{H}^1(\mathbb{R}^3) \times L^2(\mathbb{R}^3)$.
\end{thm}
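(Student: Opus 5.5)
We argue by a contraction mapping on the Duhamel formulation
\[
\Phi(u)(t)=\dot K(t)\phi+K(t)\psi-\int_0^t K(t-\tau)\big(|x|^{-b}|u|^{\alpha}u\big)(\tau)\,d\tau ,
\]
in a ball of a space of the form $X_T=C([0,T];\dot H^1)\cap L^q([0,T];L^r(\mathbb{R}^3))$. The first step is to record the Strichartz estimates for the three-dimensional wave equation at the $\dot H^1$ level: for $\dot H^1$-wave admissible $(q,r)$, i.e. $\frac1q+\frac3r=\frac12$, $\frac1q+\frac1r\le\frac12$, $q\ge2$ (excluding the endpoint $(2,\infty)$), and for dual pairs $(\tilde q',\tilde r')$ with $(\tilde q,\tilde r)$ $L^2$-wave admissible,
\[
\|u\|_{L^\infty_T\dot H^1}+\|\partial_t u\|_{L^\infty_TL^2}+\|u\|_{L^q_TL^r}\lesssim \|\phi\|_{\dot H^1}+\|\psi\|_{L^2}+\|F\|_{L^{\tilde q'}_TL^{\tilde r'}} .
\]
In the simplest version one can take the dual norm to be $L^1_TL^2_x$, so it suffices to bound $\||x|^{-b}|u|^\alpha u\|_{L^1_TL^2}$.

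The key step, and the main obstacle, is the nonlinear estimate: the weight $|x|^{-b}$ is not in any Lebesgue space. Following Guzmán's treatment of the inhomogeneous NLS, I will split $\mathbb{R}^3=B\cup B^c$ with $B$ the unit ball. On $B$ I use $|x|^{-b}\in L^{\gamma}(B)$ for $\gamma<3/b$, and on $B^c$ I use $|x|^{-b}\in L^\gamma(B^c)$ for $\gamma>3/b$. By Hölder,
\[
\||x|^{-b}|u|^\alpha u\|_{L^2(A)}\le \||x|^{-b}\|_{L^{\gamma}(A)}\|u\|_{L^{r_1}}^{\alpha}\|u\|_{L^{r_2}},\qquad A\in\{B,B^c\},
\]
with $\frac12=\frac1\gamma+\frac{\alpha}{r_1}+\frac1{r_2}$. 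For the factor $\|u\|_{L^{r_2}}$ I will use the Sobolev embedding $\dot H^1\subset L^6$, and for $\|u\|_{L^{r_1}}$ I will use either Sobolev embedding or a Strichartz norm $L^q_TL^r$. Taking the $L^1_T$ norm and applying Hölder in time then gives a bound of the form $T^{\theta}\|u\|_{X_T}^{\alpha+1}$ with some $\theta>0$. The choices of $\gamma$ on each region must be compatible with $r_1\in[6,\infty)$ or with admissibility.

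A scaling count shows this is exactly where the hypothesis enters. With pure Sobolev embedding, the scaling of $|x|^{-b}|u|^\alpha u$ in $L^2$ with $u\in\dot H^1$ gives $\frac12=\frac b3+\frac{\alpha+1}{6}$ at criticality, i.e. $\alpha=2-2b$ is the $\dot H^1$-critical exponent up to $L^6$. The stated range $\alpha<\frac{4-2b}{3}$ should correspond to choosing $L^1_TL^2$ versus a Strichartz pair so that the time exponent $\theta$ is positive while $\gamma$ stays on the correct side of $3/b$ in each region. I will first try $r_1=r_2=6$ with $\gamma$ slightly below and slightly above $3/b$. If the exponents fail to close in one region, I will replace $L^6$ by an $L^q_TL^r$ Strichartz norm for $\alpha$ of the factors, and absorb the loss into a power of $T$.

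The difference estimate follows the same way from
\[
\big||u|^\alpha u-|v|^\alpha v\big|\lesssim (|u|^\alpha+|v|^\alpha)|u-v|,
\]
giving
\[
\|\Phi(u)-\Phi(v)\|_{X_T}\lesssim T^\theta\big(\|u\|_{X_T}^\alpha+\|v\|_{X_T}^\alpha\big)\|u-v\|_{X_T}.
\]
Choosing the radius $R=2C(\|\phi\|_{\dot H^1}+\|\psi\|_{L^2})$ and $T$ with $CT^\theta R^\alpha\ll1$ makes $\Phi$ a contraction on the ball. This yields existence, uniqueness in $X_T$, and $u\in C([0,T];\dot H^1)\cap C^1([0,T];L^2)$; continuity in time follows from the Strichartz bounds and dominated convergence. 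Lipschitz dependence on the data follows from the same difference estimate applied to two solutions with different data, which completes local well-posedness.
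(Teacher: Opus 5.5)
Your overall scheme matches the paper's: a contraction in a Strichartz space, the nonlinearity placed in $L^1_TL^2$, and a split into $B$ and $B^c$. However, the exterior estimate cannot close in the homogeneous space $\dot H^1\times L^2$, and no choice of exponents fixes this. Every norm you control on $u$ has $r\ge 6$: this covers the embedding $\dot H^1\subset L^6$ and all $\dot H^1$-level Strichartz norms $L^q_TL^r$ with $\frac1q+\frac3r=\frac12$. On $B^c$ you need $\gamma>3/b$, so H\"older requires $\frac{\alpha}{r_1}+\frac1{r_2}=\frac12-\frac1\gamma>\frac{3-2b}{6}$. The left side is at most $\frac{\alpha+1}{6}$, so you need $\alpha>2-2b$.

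Your first attempt $r_1=r_2=6$ pins $\frac1\gamma=\frac{2-\alpha}{6}$. Taking $\gamma$ slightly below $3/b$ on $B$ and slightly above on $B^c$ then means $\alpha<2-2b$ and $\alpha>2-2b$ at once. Passing to Strichartz norms with larger $r$ only helps on $B$, and a power of $T$ cannot create spatial decay at infinity. Since $\frac{4-2b}{3}\le 2-2b$ exactly when $b\le\frac12$, the argument fails for every $\alpha$ in the stated range when $b\le\frac12$, and for $\alpha<2-2b$ whenever $b<1$.

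Concretely, take $u(t,x)=u_0(x)$ with $u_0$ smooth and $u_0=|x|^{-1/2-\epsilon}$ for $|x|>1$. It belongs to your $X_T$, since it lies in $\dot H^1$ and in $L^r$ for all $r\ge 6$. Yet $|x|^{-b}|u_0|^{\alpha+1}\notin L^2(B^c)$ once $(\alpha+1)(1+2\epsilon)\le 3-2b$. So the bound $\||x|^{-b}|u|^\alpha u\|_{L^1_TL^2}\lesssim T^\theta\|u\|^{\alpha+1}_{X_T}$ is false for $\alpha<2-2b$. Your scaling count found the right number, but $2-2b$ is a \emph{lower} threshold for the exterior region. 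The constraints your scheme actually imposes are $2-2b<\alpha<4-2b$ and $b<\frac32$, and the hypothesis $\alpha<\frac{4-2b}{3}$ does not supply the lower one.

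There is a second gap on $B$. There $\frac1\gamma<\frac12$ together with $\gamma<3/b$ forces $b<\frac32$. For $\frac32\le b<2$ one has $|x|^{-b}\notin L^2(B)$, so $|x|^{-b}|u|^\alpha u\notin L^1_TL^2$ for any smooth $u$ not vanishing at the origin. The family of dual pairs you invoke does not rescue this: a derivative-free dual norm $L^{\tilde q'}_TL^{\tilde r'}$ at the $\dot H^1$ level must satisfy $\frac1{\tilde q}+\frac3{\tilde r}=\frac32$. Together with $\frac1{\tilde q}\le\frac12-\frac1{\tilde r}$, this leaves only $(\tilde q,\tilde r)=(\infty,2)$. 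So your argument genuinely proves the result only for $\frac12<b<\frac32$ and $\max(0,2-2b)<\alpha<\frac{4-2b}{3}$. In that range it does close, using large $r$ on $B$ and $\gamma$ just above $3/b$ on $B^c$.

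You will not find the missing idea in the paper's Lemma~\ref{nonlinear}. Its exterior term is bounded through $\|u\|_{L^{2(\alpha+1)}}$ with $2(\alpha+1)<6$, via an \emph{admissible pair} whose time exponent $\frac{2(\alpha+1)}{\alpha-2}$ is negative. Its interior term also needs $2<\gamma<3/b$. What is really needed is control of $u$ in some $L^p$ with $p<6$ at spatial infinity. With inhomogeneous data $(\phi,\psi)\in H^1\times L^2$ this is available: $|\sin(t|\xi|)|/|\xi|\le t$ gives $\|u\|_{L^\infty_TL^2}\lesssim\|\phi\|_{L^2}+T\|\psi\|_{L^2}+T\||x|^{-b}|u|^\alpha u\|_{L^1_TL^2}$. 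The exterior piece is then bounded by $T\|u\|^{\alpha+1}_{L^\infty_TH^1}$ for $\alpha\le 2$. In the homogeneous space as stated, you must instead restrict to $\alpha>2-2b$ and $b<\frac32$.
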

\begin{thm} \label{global well-posedness L^2}
    Under the conditions $0 < \alpha < \frac{4-2b}{3}$ and $0 < b < 2$, equation \eqref{1.1} is globally well-posed in $\dot{H}^1(\mathbb{R}^3) \times L^2(\mathbb{R}^3)$ for small initial data.
\end{thm}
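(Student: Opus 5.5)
The plan is to run the same contraction argument as for Theorem \ref{local well-posedness L^2}, but on an arbitrary (or infinite) time interval, and to get smallness from the data instead of from the length of the interval. We work with the Duhamel map
\[
\Phi(u)(t)=\dot K(t)\phi+K(t)\psi-\int_0^t K(t-\tau)\bigl(|x|^{-b}|u|^{\alpha}u\bigr)(\tau)\,d\tau .
\]
It acts on a ball of the space
\[
X=C(\mathbb{R};\dot H^1)\cap C^1(\mathbb{R};L^2)\cap L^q(\mathbb{R};L^r(\mathbb{R}^3)),
\]
where $(q,r)$ is an $\dot H^1$-admissible Strichartz pair in three dimensions:
\[
\frac1q+\frac3r=\frac12,\qquad \frac1q+\frac1r\le\frac12 ,
\]
with $q<\infty$. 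The global homogeneous Strichartz estimate gives
\[
\|\dot K\phi+K\psi\|_{X}\le C\bigl(\|\phi\|_{\dot H^1}+\|\psi\|_{L^2}\bigr).
\]
The inhomogeneous estimate controls the Duhamel term by $\|F\|_{L^{\tilde q'}_tL^{\tilde r'}_x}$ for a dual admissible pair, or simply by $\|F\|_{L^1_tL^2_x}$.

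The key step is a nonlinear estimate with no power of $T$ in front:
\[
\bigl\||x|^{-b}|u|^{\alpha}u\bigr\|_{L^{\tilde q'}_tL^{\tilde r'}_x}\le C\|u\|_{L^{\infty}_t\dot H^1}^{\theta}\,\|u\|_{L^q_tL^r_x}^{\alpha+1-\theta}.
\]
To prove it, split $\mathbb{R}^3=B\cup B^c$ with $B$ the unit ball. On each piece apply H\"older with $|x|^{-b}\in L^{\gamma}(B)$ for $\gamma<3/b$, and $|x|^{-b}\in L^{\gamma}(B^c)$ for $\gamma>3/b$. The factors of $u$ are then placed in $L^r$ or in $L^6$, and $L^6$ is controlled by $\dot H^1$ through Sobolev. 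Alternatively, a Hardy-type inequality could absorb the weight.

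The exponents must close in time as well, which is not automatic: the H\"older exponents in $t$ have to match exactly, with no leftover factor $T^{\delta}$. The condition $\alpha<\frac{4-2b}{3}$ should leave just enough room to choose $(q,r)$, $(\tilde q,\tilde r)$ and $\gamma$ so that both the time and the space exponents balance on $B$ and on $B^c$. I expect this to be the main obstacle. Scaling invariance forces a single critical relation, so the exponents have to be tuned, possibly separately on $B$ and $B^c$, with two different dual pairs on the two regions. If the exact balance fails, I would instead use the $T^\delta$ estimate from the local theory together with a uniform a priori bound; energy conservation for the defocusing sign gives control of $\|u\|_{\dot H^1}$ and $\|u_t\|_{L^2}$ and allows iteration. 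That route gives a global solution in the energy space, but without scattering-type control.

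Granting the estimate, set $\varepsilon=\|\phi\|_{\dot H^1}+\|\psi\|_{L^2}$ and take the ball of radius $2C\varepsilon$ in $X$. Then
\[
\|\Phi(u)\|_X\le C\varepsilon+C(2C\varepsilon)^{\alpha+1}\le 2C\varepsilon
\]
for $\varepsilon$ small. The difference estimate uses the pointwise bound
\[
\bigl||u|^{\alpha}u-|v|^{\alpha}v\bigr|\lesssim\bigl(|u|^{\alpha}+|v|^{\alpha}\bigr)|u-v|
\]
and the same H\"older scheme, giving the Lipschitz constant $C\varepsilon^{\alpha}<\frac12$. Since $\alpha$ may be less than $1$, the contraction is performed in the weaker metric $L^q_tL^r_x$, and the ball is closed in that metric by weak compactness. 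The fixed point gives a global solution. Continuous dependence on the data follows from the same difference estimate, and uniqueness in $X$ follows as in Theorem \ref{local well-posedness L^2}.
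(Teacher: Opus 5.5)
\textbf{Gap in the main route.} The key estimate
\[
\bigl\||x|^{-b}|u|^{\alpha}u\bigr\|_{L^{\tilde q'}_tL^{\tilde r'}_x}\le C\|u\|_{L^\infty_t\dot H^1}^{\theta}\|u\|_{L^q_tL^r_x}^{\alpha+1-\theta}
\]
with no power of $T$ is false for every $\alpha\neq 4-2b$. So everything after ``Granting the estimate'' rests on an inequality that cannot hold.

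On the whole time line, the $\dot H^1$-level Strichartz estimates force $\frac1q+\frac3r=\frac12$ and $\frac1{\tilde q'}+\frac3{\tilde r'}=\frac52$. Under the rescaling $u_\lambda(t,x)=\lambda^{1/2}u(\lambda t,\lambda x)$, the right-hand side is invariant. The left-hand side is multiplied by $\lambda^{(\alpha-4+2b)/2}$, which tends to $\infty$ as $\lambda\to0$ when $\alpha<4-2b$.

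In your H\"older scheme the failure appears exactly on $B^c$. Balancing exponents forces $\frac3\gamma=\frac{4-\alpha}{2}$, whereas $|x|^{-b}\in L^\gamma(B^c)$ requires $\frac3\gamma<b$, i.e.\ $\alpha>4-2b$. The hypothesis $\alpha<\frac{4-2b}{3}$ means $s_c=\frac32-\frac{2-b}{\alpha}<0$. It does not leave ``just enough room''; it leaves none. No choice of pairs, splitting into regions, or Hardy inequality (itself $\dot H^1$-scale-invariant) can repair this. Small-data global contraction in $\dot H^1$-level Strichartz norms is an energy-critical mechanism.

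\textbf{Your fallback is the paper's proof.} The argument runs in three steps:
\begin{itemize}
\item The local theory of Theorem~\ref{local well-posedness L^2} gives an existence time depending only on $\|(\varphi,\psi)\|_{\dot H^1\times L^2}$, thanks to the factor $T^{\theta_1}+T^{\theta_2}$.
\item Conservation of the defocusing energy bounds $\|(u,u_t)(t)\|_{\dot H^1\times L^2}$ uniformly in $t$.
\item The blow-up alternative, or iteration, then gives global existence.
\end{itemize}
You should make this the argument rather than a contingency.

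You also need to supply the one point it depends on: the initial potential energy $\frac{1}{\alpha+2}\int|x|^{-b}|\varphi|^{\alpha+2}\,dx$ must be finite and controlled. In this subcritical range it is not controlled by $\|\varphi\|_{\dot H^1}$ alone; the same $B^c$ computation again requires $\alpha>4-2b$. For example, $\varphi=\langle x\rangle^{-\beta}$ with $\beta$ slightly above $\frac12$ lies in $\dot H^1$ but has infinite potential energy. So some extra integrability, such as $\varphi\in L^2$, is needed. The paper handles this step with a Gagliardo--Nirenberg bound on $E(u(0))$ and a density argument for data in $H^1\times L^2$.
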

\begin{thm}\label{local H^s}
    Let $0<\alpha<\frac{4-2b}{3-2s}$ and $\frac{1}{2}<b<\frac{3}{2}$. If $0<s<b-\frac{1}{2}$.  Then for any initial data $(\varphi,\psi)\in\dot{H}^{s+1}(\mathbb{R}^3)\times\dot{H}^{s}(\mathbb{R}^3)$, equation \eqref{1.1} is locally well-posed in the space $\dot{H}^{s+1}(\mathbb{R}^3)\times\dot{H}^{s}(\mathbb{R}^3)$. 
\end{thm}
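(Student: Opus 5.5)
We will prove Theorem \ref{local H^s} by a contraction argument for the Duhamel map
\[
\Phi(u)(t)=\dot K(t)\varphi+K(t)\psi-\int_0^t K(t-\tau)\bigl(|x|^{-b}|u|^{\alpha}u\bigr)(\tau)\,d\tau
\]
on a closed ball of a Strichartz space on $[0,T]\times\mathbb{R}^3$. The space is built at the $\dot H^{s+1}$ level. We take
\[
X_T=C([0,T];\dot H^{s+1})\cap C^1([0,T];\dot H^{s})\cap L^q_T\dot W^{s+\frac12,r},
\]
with $(q,r)$ a wave-admissible pair in $\mathbb{R}^3$. The natural candidate is $(q,r)=(4,4)$, so that $|D|^{s+\frac12}u\in L^4_TL^4_x$, following the usual $\dot H^{1}$ choice $L^4\dot W^{1/2,4}$ shifted by $s$. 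We also keep the pair $(\infty,2)$.

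The linear inputs are the homogeneous and inhomogeneous Strichartz estimates, applied after commuting $|D|^{s}$ through $K,\dot K$:
\[
\|u\|_{X_T}\lesssim \|\varphi\|_{\dot H^{s+1}}+\|\psi\|_{\dot H^{s}}+\bigl\||D|^{s}F\bigr\|_{L^{\tilde q'}_T L^{\tilde r'}_x}.
\]
The dual pair is chosen at the gap level, for instance $L^1_TL^2_x$ or the dual of $L^4_T\dot W^{-1/2,4}$, i.e. $|D|^{s+\frac12}F\in L^{4/3}_TL^{4/3}_x$. We use the Sobolev embedding $\dot W^{s+\frac12,4}\subset L^{p}$ with $\frac1p=\frac14-\frac{s+1/2}{3}$, and $\dot H^{s+1}\subset L^{6/(1-2s)}$ (note $s<b-\frac12<1$). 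The time exponent margin coming from $\alpha<\frac{4-2b}{3-2s}$, which is exactly energy subcriticality at regularity $s+1$, will give a factor $T^{\theta}$ with $\theta>0$.

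The core step is the nonlinear estimate
\[
\bigl\||D|^{\sigma}(|x|^{-b}|u|^{\alpha}u)\bigr\|_{L^{\gamma'}_x}\lesssim \|u\|^{\alpha}_{A}\,\|u\|_{B}
\]
for the appropriate $\sigma$ ($\sigma=s$ or $s+\frac12-1$ depending on which dual norm is used). The difference estimate is the analogue with $|u|^\alpha u-|v|^\alpha v$. Following the strategy of \cite{Guzmán2016}, we split $\mathbb{R}^3$ into $B=\{|x|\le1\}$ and $B^c$. On each piece we apply the fractional Leibniz rule to $|x|^{-b}\cdot(|u|^\alpha u)$, which produces two terms:
\[
\bigl\||D|^{\sigma}|x|^{-b}\bigr\|_{L^{a}}\,\bigl\||u|^{\alpha}u\bigr\|_{L^{c}}
\quad\text{and}\quad
\bigl\||x|^{-b}\bigr\|_{L^{d}}\,\bigl\||D|^{\sigma}(|u|^{\alpha}u)\bigr\|_{L^{e}}.
\]
Here $|D|^{\sigma}|x|^{-b}=c|x|^{-b-\sigma}$, which lies in $L^{a}(B)$ iff $a(b+\sigma)<3$ and in $L^{a}(B^c)$ iff $a(b+\sigma)>3$. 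For $|D|^\sigma(|u|^\alpha u)$ we use the fractional chain rule, valid since $\sigma<1$; this is where $\alpha>0$ without a smoothness assumption on $|u|^\alpha u$ is harmless. Alternatively we avoid Leibniz on the weight by using a Hardy/Stein–Weiss type inequality $\||x|^{-b}g\|_{\dot H^{\sigma}}\lesssim \|g\|_{\dot H^{\sigma+b}}$-type bounds. We expect the hypotheses $\frac12<b<\frac32$ and $s<b-\frac12$ to be precisely what makes the exponent arithmetic consistent. The condition $b-s>\frac12$ should guarantee integrability of $|x|^{-b-\sigma}$ against the Sobolev exponents available near the origin and at infinity.

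\textbf{Main obstacle.} The hardest part is the bookkeeping of Hölder exponents: we must find, separately on $B$ and $B^c$, exponents $a,c,d,e$ satisfying the local integrability conditions, the Sobolev constraints, and the admissibility constraints simultaneously, while leaving a positive power of $T$. We will first fix the target $L^{1}_T\dot H^{s}_x$ for $F$, which is the simplest dual norm, and see whether the range $\alpha<\frac{4-2b}{3-2s}$ closes. If it does not, we switch to the $L^{4/3}_T\dot W^{s-\frac12,4/3}$ dual norm.

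Once the nonlinear and difference bounds
\[
\|\Phi u\|_{X_T}\le C\|(\varphi,\psi)\|+CT^{\theta}\|u\|^{\alpha+1}_{X_T},\qquad
\|\Phi u-\Phi v\|_{L^q_TL^p}\le CT^{\theta}\bigl(\|u\|^{\alpha}+\|v\|^{\alpha}\bigr)\|u-v\|_{L^q_TL^p}
\]
hold, the proof finishes in the standard way. The differences are measured in a lower Lebesgue-type norm, which avoids the non-Lipschitz chain rule, together with the completeness of the ball in that weaker metric. With these, choosing $T$ small in terms of the data norm gives a contraction. Uniqueness, continuous dependence and persistence of regularity then follow as in Theorem \ref{local well-posedness L^2}.
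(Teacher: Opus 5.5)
Your outline follows the paper's route: the Duhamel map, Strichartz at the $\dot H^{s+1}$ level with dual norm $L^1_T\dot H^s$, the splitting $B\cup B^c$ with the fractional Leibniz and chain rules and $D^s|x|^{-b}=c|x|^{-b-s}$, and a contraction in a weaker Strichartz metric on a ball that is closed for that metric. But you stop exactly where the content is. The bound $\|D^s(|x|^{-b}|u|^\alpha u)\|_{L^1_TL^2}\lesssim T^\theta\|u\|^{\alpha+1}$ is deferred as ``bookkeeping'' that you expect the hypotheses to make consistent. That expectation fails for a structural reason, not an arithmetic one.

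Take $u(t,x)=\epsilon f(x/\lambda)$ with $f\in\mathcal S$, $f(0)\neq0$, $\epsilon=a\lambda^{s-\frac12}$ and $\lambda\ge T$. Every $\dot H^{s+1}$-scaled quantity you control is $\lesssim_f a$: $L^\infty_T\dot H^{s+1}$, $\partial_t u$ in $L^\infty_T\dot H^s$ (here $\partial_tu=0$), $L^4_T\dot W^{s+\frac12,4}$, and any weaker $L^q_TL^p$ metric. Meanwhile
\[
\bigl\|D^s(|x|^{-b}|u|^\alpha u)\bigr\|_{L^2}=c_f\,a^{\alpha+1}\lambda^{1-b-\alpha(\frac12-s)},\qquad 0<c_f<\infty \text{ when } b+s<\tfrac32 .
\]
This forcing $F$ sits at frequencies $\sim\lambda^{-1}\ll t^{-1}$, so the Duhamel term contributes $\frac{\sin(t|D|)}{|D|}F\approx tF$ to $\partial_t\Phi(u)(t)$. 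Hence if $\alpha(\frac12-s)<1-b$, then $\sup_u\|\partial_t\Phi(u)(t)\|_{\dot H^s}=\infty$ over every ball of $X_T$, for every $a,T$, whatever dual norm you choose. The hypotheses allow this, e.g. $b=\frac34$, $s=\frac18$, $\alpha=\frac12$. On $B^c$ you would therefore need an extra assumption such as $\alpha(\frac12-s)\ge 1-b$, or additional low-frequency ($L^2$-type) control of the data. The paper's proof of Lemma \ref{nonlinear2} only writes out the $B$ piece, so it does not supply this step either.

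Near the origin the $L^1_T\dot H^s$ route has a separate problem. The Leibniz term needs $|x|^{-b-s}\in L^{r_1}(B)$ with $r_1\ge2$, i.e. $b+s<\frac32$. The hypothesis $s<b-\frac12$ does not give this when $b>1$ (e.g. $b=\frac54$, $s=\frac12$). In that case $D^s(|x|^{-b}|u|^\alpha u)\notin L^2(B)$ for every smooth $u$ with $u(0)\neq0$.

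A few side claims are also off:
\begin{itemize}
\item $\alpha<\frac{4-2b}{3-2s}$ is $s_c<s$ with $s_c=\frac32-\frac{2-b}{\alpha}$. It is not energy-subcriticality at level $s+1$, which would read $\alpha<\frac{4-2b}{1-2s}$.
\item The dual norm paired with $(4,4)$ at level $\dot H^{s+1}$ is $L^{4/3}_T\dot W^{s+\frac12,4/3}$, not $\dot W^{s-\frac12,4/3}$. Your fallback therefore adds derivatives, and once $s>\frac12$ it leaves the range $(0,1]$ of the chain rule.
\item $\dot H^{s+1}\subset L^{6/(1-2s)}$ requires $s<\frac12$, not merely $s<1$.
\end{itemize}
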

\begin{remark}
   A global existence result for solutions under the conditions of Theorem \ref{local H^s} is not obtained in the present paper. We expect to improve this in future work.
\end{remark}

This paper is organized as follows. In Section 2, we collect some necessary definitions and preliminary results. The proofs of Theorems \ref{local well-posedness L^2} and \ref{global well-posedness L^2} are carried out in Section 3, and Section 4 is devoted to the proof of Theorem \ref{local H^s}.
\section{Basic Results and Preliminaries}
The definitions and theorems presented in this section are based on \cite{Miao2005} and \cite{Guzmán2016}.

Let $S(\Rn)$ be the set of the functions such that 
\[
S(\Rn):=\{f\in C^\infty(\Rn):|x|^\beta|\partial^\alpha
_x(f)|<\infty \quad \forall \alpha,\beta\in \mathbb{N}^n\},
\] 
and let $S'(\Rn)$ be the dual space of $S(\Rn).$ Now, to be convenient, we introduce the Littewood-Paley decomposition. Let $\phi$ be smooth function supported in $B(0,2)$ and $\phi\equiv 1$ when $|\xi|\le 1.$  So, for dyadic $2^N\in 2^\mathbb{Z},$  we then define $P_{N}$
   \begin{align*}
        &\widehat{P_{ N} f}(\xi)=(\phi(\xi/N)-\phi(2\xi/N))\hat{f}(\xi).
    \end{align*}
We introduce the defination of Besov Space.
\begin{definition}
Define the Besov and Triebel spaces separately. 
 \[
 \dot{B}_{p,q}^{\sigma} = \left\{ f \in S'(\Rn)/\mathcal{P},\ \left\{ \sum_{j} 2^{j\sigma q} \| P_{ N} f\|_{L^p}^q \right\}^{1/q} = \|f\|_{\dot{B}_{p,q}^{\sigma}} < \infty \right\},
 \]
 \[
 \dot{F}_{p,q}^{\sigma} = \left\{ v \in S'(\Rn)/\mathcal{P},\ \left\| \left\{ \sum_j |2^{j\sigma} P_Nf|^q \right\}^{1/q} \right\|_p = \|v\|_{\dot{F}_{p,q}^{\sigma}} < \infty \right\}.
 \]
where $\mathcal{P}$ is the set of all polynomial on $\Rn$   

\end{definition}
For the Besov Space, we have the following properties.
\begin{lemma}
    令$1<p,q<\infty$, $\sigma\in \mathbb{R}$, There exist the following embedding relations
    \begin{equation}
        \dot{B}^\sigma_{p,\min(p,q)}\hookrightarrow \dot{F}^\sigma_{p,q}\hookrightarrow \dot{B}^\sigma_{p,\max(p,q)}.
    \end{equation}
\end{lemma}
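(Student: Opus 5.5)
The plan is to prove both embeddings directly from the definitions of the homogeneous Besov and Triebel--Lizorkin norms, with nothing deeper than the monotonicity of $\ell^r$ norms and Minkowski's inequality. To simplify notation, set $a_j(x)=2^{j\sigma}|P_N f(x)|$ with $N=2^j$. Then
\[
\|f\|_{\dot B^\sigma_{p,q}}=\big\|\,\|a_j\|_{L^p_x}\big\|_{\ell^q_j},\qquad \|f\|_{\dot F^\sigma_{p,q}}=\big\|\,\|a_j(x)\|_{\ell^q_j}\big\|_{L^p_x}.
\]
Both statements therefore reduce to comparing the mixed norms $\ell^q L^p$ and $L^p\ell^q$ of the sequence of functions $(a_j)$.

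\emph{Right embedding.} Put $r=\max(p,q)$. Since $r\ge q$, the pointwise inclusion $\ell^q\hookrightarrow\ell^r$ gives
\[
\|f\|_{\dot F^\sigma_{p,q}}\ge \big\|\,\|a_j\|_{\ell^r_j}\big\|_{L^p_x}.
\]
Next apply Minkowski's integral inequality in the form $\|\,\|a_j\|_{L^p}\|_{\ell^r}\le\|\,\|a_j\|_{\ell^r}\|_{L^p}$, which holds because $r\ge p$. Concretely, take the $p$-th power and use the triangle inequality in $\ell^{r/p}$ with $r/p\ge 1$. This yields $\|f\|_{\dot B^\sigma_{p,r}}\lesssim\|f\|_{\dot F^\sigma_{p,q}}$.

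\emph{Left embedding.} Put $m=\min(p,q)$. The inclusion $\ell^m\hookrightarrow\ell^q$ gives
\[
\|f\|_{\dot F^\sigma_{p,q}}\le\big\|\,\|a_j\|_{\ell^m_j}\big\|_{L^p_x}.
\]
Since $p\ge m$, the reverse Minkowski direction applies. Writing
\[
\big\|\,\|a_j\|_{\ell^m}\big\|_{L^p}^m=\Big\|\sum_j a_j^m\Big\|_{L^{p/m}}\le\sum_j\|a_j^m\|_{L^{p/m}}=\sum_j\|a_j\|_{L^p}^m,
\]
which is the triangle inequality in $L^{p/m}$ with $p/m\ge1$, we obtain $\|f\|_{\dot F^\sigma_{p,q}}\le\|f\|_{\dot B^\sigma_{p,m}}$.

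No step is a genuine obstacle. The only points needing care are that each inequality is the triangle inequality in a space whose exponent is at least $1$, and this is exactly what the choices $\min$ and $\max$ guarantee. One should also note that the quotient by polynomials is harmless, since every norm involved is defined through the $P_N f$, and these annihilate polynomials. The hypothesis $1<p,q<\infty$ is not even essential for this argument.
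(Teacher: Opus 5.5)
Your proof is correct. Both embeddings do reduce to comparing the mixed norms $\ell^q_jL^p_x$ and $L^p_x\ell^q_j$ of $a_j=2^{j\sigma}|P_Nf|$. Each of your two steps is valid precisely because $r/p\ge 1$ and $p/m\ge 1$:
\begin{itemize}
\item the first step is monotonicity of $\ell^r$ norms;
\item the second step is Minkowski's integral inequality in $\ell^{r/p}$ for the right embedding, and the triangle inequality in $L^{p/m}$ for the left one.
\end{itemize}

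The paper gives no proof of this lemma. It is quoted as a standard fact from the sources named at the start of Section~2, and your argument is the usual textbook proof of it.

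What your write-up adds is that it is self-contained and shows the embedding constants equal $1$ for a fixed Littlewood--Paley decomposition. As you remark, it also works for all $0<p<\infty$, $0<q\le\infty$, since $p/m\ge1$ and $r/p\ge1$ hold regardless of the size of $p,q$.

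The only detail worth spelling out is routine. For the infinite sum $\sum_j a_j^m$ in $L^{p/m}$, and likewise for Minkowski's inequality in $\ell^{r/p}$, first apply the inequality to finite partial sums, then pass to the limit by monotone convergence.
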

\begin{lemma}
let $1 \leqslant p \leqslant r \leqslant \infty,$ and $\quad 1 \leqslant q \leqslant \infty.$ Then, for $ \rho, \sigma \in \mathbb{R},$ we have 
\[
\dot{B}_{p,q}^{\sigma} \hookrightarrow \dot{B}_{r,q}^{\rho}, \quad\text{such that} \quad\|f\|_{ \dot{B}_{r,q}^{\rho}} \leqslant C \|f\|_{\dot{B}_{p,q}^{\sigma}}, 
\] when $\sigma - \dfrac{n}{p} = \rho - \dfrac{n}{r}.$
   
\end{lemma}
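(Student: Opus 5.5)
The statement is the homogeneous Besov embedding: for $1\le p\le r\le\infty$, $1\le q\le\infty$ and $\sigma-\frac{n}{p}=\rho-\frac{n}{r}$,
\[
\|f\|_{\dot{B}^{\rho}_{r,q}}\le C\|f\|_{\dot{B}^{\sigma}_{p,q}} .
\]
I would prove it one dyadic block at a time, using Bernstein's inequality, and then sum in $\ell^q$. The sum is automatic because the scaling condition makes the dyadic weights match exactly.

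\textbf{Step 1 (Bernstein).} Let $N=2^j$. The Fourier transform of $P_Nf$ is supported in the annulus $\{N/2\le|\xi|\le 2N\}$. Fix a Schwartz function $\tilde\chi$ whose Fourier transform equals $1$ on $\{1/2\le|\xi|\le2\}$ and is supported in a slightly larger annulus, and set $\tilde\chi_N(x)=N^n\tilde\chi(Nx)$. Then $P_Nf=\tilde\chi_N*P_Nf$. Choose $s$ with
\[
1+\frac{1}{r}=\frac{1}{s}+\frac{1}{p}, \qquad s\in[1,\infty],
\]
which is possible since $p\le r$. Young's inequality gives
\[
\|P_Nf\|_{L^r}\le\|\tilde\chi_N\|_{L^s}\|P_Nf\|_{L^p}.
\]
By scaling, $\|\tilde\chi_N\|_{L^s}=N^{n(1-1/s)}\|\tilde\chi\|_{L^s}=CN^{n(1/p-1/r)}$. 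Hence
\[
\|P_Nf\|_{L^r}\le C\,2^{jn(1/p-1/r)}\|P_Nf\|_{L^p},
\]
with $C$ independent of $j$.

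\textbf{Step 2 (weights).} Multiply by $2^{j\rho}$. The hypothesis $\rho=\sigma-\frac{n}{p}+\frac{n}{r}$ gives $2^{j\rho}2^{jn(1/p-1/r)}=2^{j\sigma}$. Therefore
\[
2^{j\rho}\|P_Nf\|_{L^r}\le C\,2^{j\sigma}\|P_Nf\|_{L^p}
\]
for every $j$. Taking the $\ell^q$ norm in $j$ (or the supremum when $q=\infty$) yields the claimed inequality.

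\textbf{Step 3 (well-definedness).} Since $f\in S'/\mathcal P$ and the estimate is blockwise, the finite $\dot B^{\rho}_{r,q}$ norm places $f$ in the target space modulo polynomials. If the space is instead realized concretely, one would check convergence of $\sum_j P_Nf$ in $S'$. I would leave that to the standard convention already adopted in the definition.

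The only real point is Step 1: the uniformity of the constant in $N$ via scaling, and keeping the reproducing kernel $\tilde\chi$ fixed so that $C$ depends only on $n,p,r$. Everything else is bookkeeping.
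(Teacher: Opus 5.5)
Your argument is correct: the blockwise Bernstein bound via Young's inequality with the rescaled kernel $\tilde\chi_N$ (whose $L^s$ norm is $CN^{n(1/p-1/r)}$), combined with $\rho+n/p-n/r=\sigma$, sums directly in $\ell^q$, and Step 3 is immediate because $P_N$ annihilates polynomials, so the norms are already well defined on $S'/\mathcal{P}$. The paper gives no proof of its own; it quotes the lemma from Miao's monograph cited at the start of Section 2, where this blockwise Bernstein argument is the standard proof of the embedding.
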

\begin{definition}\cite{Miao2005}
  We call the following pair of indices a wave-admissible pair if
$2 \le q, r \le \infty$, $(q, r, \gamma(r)) \neq (2, \infty, 1)$,
and it satisfies
    $$0\le\frac{2}{q}\le\gamma(r)=(n-1)(\frac{1}{2}-\frac{1}{r})\le1.$$
    We denote $(q, r) \in \tilde{\mathcal{A}}$. In particular, when
$\frac{2}{q} = \gamma(r)$, the pair $(q, r)$ is called an optimal wave-admissible pair, denoted $(q, r) \in \mathcal{A}$.
\end{definition}
In this paper, we define the Strichartz norm associated with the wave equation by
$$\|u\|_{W(\dot{H}^s)}=\sup_{(q,r)\in\mathcal{A}} \|u\|_{L^q_tH^{s,3r}_x}$$
and denote by the dual Strichartz norm
$$\|u\|_{W^{\prime}(\dot{H}^{-s})}=\inf_{(q,r)\in\mathcal{A}} \|u\|_{L^{q^\prime}_tH^{s,3r^\prime}_x}.$$
\begin{remark}
    When $s = 0$, $W(\dot{H}^0) = W(L^2)$. Here, both $W(\dot{H}^s)$ and $W'(\dot{H}^{-s})$ denote estimates over the entire Euclidean space $\mathbb{R} \times \mathbb{R}^N$. For a time interval $I \subset (-\infty, \infty)$ and a subset $A \subset \mathbb{R}^N$, we similarly adopt the notation $W(\dot{H}^s(A); I)$ and $W'(\dot{H}^{-s}(A); I)$.
\end{remark} 
\begin{thm}[Strichartz estimates for the wave equation]\cite{Miao2005} \label{strichartz}
Suppose $(q, r), (q_j, r_j) \in \mathcal{A}\ (j=1,2)$, and $I = \mathbb{R}$ or $I \subset \mathbb{R}$ with $0 \in \overline{I}$. Then the following estimates hold:
\[
\|\dot{K}\varphi\|_{L^q(I;\dot{B}_{r,2}^{\sigma-\beta(r)})}
+ \|K(t)\psi\|_{L^q(I;\dot{B}_{r,2}^{\sigma-\beta(r)})}
\leq C\,\|(D^\sigma\varphi,D^{\sigma-1}\psi)\|_2,
\]
\[
\|Gh\|_{L^{q_1}(I;\dot{B}_{r_1,2}^{\sigma-\beta(r_1)})}
\leq C\,\|h\|_{L^{q_2'}(I;\dot{B}_{r_2',2}^{\sigma+\beta(r_2)-1})},
\]
Particularly,
\[
\|Gh\|_{L^q(I;\dot{B}_{r,2}^{\sigma-\beta(r)})}
\leq C\,\|h\|_{L^{q'}(I;\dot{B}_{r',2}^{\sigma+\beta(r)-1})}.
\]
\end{thm}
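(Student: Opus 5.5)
The approach is to reduce the estimates to a single dyadic frequency block, prove them there with the dispersive estimate and the $TT^*$ method, and then sum the blocks in $\ell^2$; throughout, $\beta(r)=\frac{n+1}{2}\left(\frac12-\frac1r\right)$. Write $\dot K(t)=\frac12(e^{itD}+e^{-itD})$ and $K(t)=\frac{1}{2i}D^{-1}(e^{itD}-e^{-itD})$ with $D=|\nabla|$. Then the homogeneous part of the theorem reduces to a single estimate for the half-wave group,
\[
\|e^{\pm itD}f\|_{L^q(I;\dot B^{\sigma-\beta(r)}_{r,2})}\le C\|D^\sigma f\|_{2}, \qquad (q,r)\in\mathcal A,
\]
applied with $f=\varphi$ and with $f=D^{-1}\psi$. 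Since $D^\sigma$ commutes with everything, we may take $\sigma=0$.

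\textbf{Step 1 (frequency-localized dispersive estimate).} Take $\tilde P_1$ with symbol supported in the annulus $\{1/4\le|\xi|\le 4\}$. By stationary phase on the sphere (the decay of $\widehat{d\sigma}$) we expect
\[
\|e^{itD}\tilde P_1 f\|_{L^\infty}\le C(1+|t|)^{-\frac{n-1}{2}}\|f\|_{L^1}.
\]
Interpolating with the $L^2$ isometry gives
\[
\|e^{itD}\tilde P_1 f\|_{L^r}\le C|t|^{-\gamma(r)}\|f\|_{L^{r'}}.
\]
Scaling $\xi\mapsto N\xi$ then gives the estimate at frequency $N$ with the factor $N^{n(1-2/r)}(N|t|)^{-\gamma(r)}$.

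\textbf{Step 2 ($TT^*$ at a single frequency).} Apply the $TT^*$ argument, using Hardy--Littlewood--Sobolev in time when $2/q=\gamma(r)<1$, or the Keel--Tao endpoint argument when $q=2$. This is where the exclusion $(q,r,\gamma)\neq(2,\infty,1)$ enters. It yields
\[
\|e^{itD}P_N f\|_{L^q_tL^r_x}\le C N^{\frac n2-\frac nr-\frac1q}\|P_Nf\|_2 .
\]
On $\mathcal A$ the exponent equals $\beta(r)$, since $\frac n2-\frac nr-\frac{n-1}{2}\left(\frac12-\frac1r\right)=\frac{n+1}{2}\left(\frac12-\frac1r\right)$.

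\textbf{Step 3 (summing the dyadic blocks).} Since $q,r\ge2$, Minkowski's inequality gives
\[
\|\cdot\|_{L^q_t\ell^2_N L^r_x}\le\|\cdot\|_{\ell^2_N L^q_tL^r_x}.
\]
Combined with orthogonality of the $P_N$ in $L^2$, this yields the homogeneous estimate in $L^q\dot B^{-\beta(r)}_{r,2}$.

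\textbf{Step 4 (inhomogeneous estimate).} Write $Gh$ via $D^{-1}e^{\pm i(t-\tau)D}$. For two different pairs, apply the bilinear $TT^*$ argument: the dual of the homogeneous estimate, composed with the homogeneous estimate itself, gives the untruncated operator $\int_{\mathbb R}$ from $L^{q_2'}\dot B^{\beta(r_2)-1}_{r_2',2}$ into $L^{q_1}\dot B^{-\beta(r_1)}_{r_1,2}$. The factor $D^{-1}$ accounts for the shift by $-1$ in regularity. The retarded truncation $\tau<t$ is then recovered by the Christ--Kiselev lemma whenever $q_2'<q_1$. In the double-endpoint case $q_1=q_2=2$, Christ--Kiselev is unavailable, so I would use the Keel--Tao retarded estimate directly, working dyadically in $N$ and summing in $\ell^2$ as in Step 3. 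The case $(q_1,r_1)=(q_2,r_2)$ is the particular case stated at the end of the theorem.

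\textbf{Main obstacle.} I expect the endpoint $q=2$ (for $n\ge4$, where $\gamma(r)=1$ is attained at finite $r$) to be the hardest part, because Hardy--Littlewood--Sobolev fails there. My first attempt would be the Keel--Tao dyadic bilinear decomposition in $|t-s|\sim2^j$ together with atomic interpolation, applied to each frequency block separately. In dimension $n=3$, which is what the paper actually uses, the endpoint $(2,\infty)$ is excluded. The optimal pairs then have $q>2$, so only the Hardy--Littlewood--Sobolev argument and Christ--Kiselev are needed.
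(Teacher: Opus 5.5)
The paper does not prove this theorem. It is quoted from \cite{Miao2005}, and the only accompanying text is the remark that it suffices to treat optimal pairs, so there is no in-paper argument to set yours against. What you give is the standard proof, and it is correct. The following steps all check out:
the rescaled dispersive bound $N^{n(1-2/r)}(N|t|)^{-\gamma(r)}$;
the $TT^*$/Hardy--Littlewood--Sobolev exponent $N^{n/2-n/r-1/q}=N^{\beta(r)}$ on $\mathcal A$;
the Minkowski step, which needs only $q\ge 2$;
the composition of the homogeneous estimate with its dual, which produces exactly the indices $\sigma-\beta(r_1)$ and $\sigma+\beta(r_2)-1$.
Since $q_2'\le 2\le q_1$, Christ--Kiselev covers every case except $q_1=q_2=2$, and that case cannot occur for $n=3$. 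You also correctly supply $\beta(r)=\frac{n+1}{2}(\frac12-\frac1r)$, which the paper uses without ever defining.

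Two small completions are needed. First, the pair $(\infty,2)$, where $\gamma(r)=0$, is not an HLS case; it is simply the energy identity. Second, in your dyadic treatment of the double endpoint you also need, on the right-hand side, $\|\cdot\|_{\ell^2_N L^{q_2'}_t}\le\|\cdot\|_{L^{q_2'}_t\ell^2_N}$. This is the reverse of your Step 3 inequality and holds because $q_2'\le 2$.

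For comparison, Christ--Kiselev can be avoided entirely, which is the more classical route. Your $TT^*$ bound uses only the pointwise estimate for $\|e^{i(t-\tau)D}P_N^2\|_{L^{r'}\to L^r}$. Hence the same HLS argument applies verbatim to the kernel truncated to $\tau<t$ and gives the diagonal retarded estimate directly. The off-diagonal case then follows at each frequency by H\"older interpolation between this and the $L^\infty_tL^2_x$ bound, since both $1/q$ and $\beta(r)$ are affine in $1/r$ along $\mathcal A$; duality handles $r_1>r_2$. That route is more elementary, while yours is shorter and treats all non-double-endpoint pairs at once.
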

\begin{remark}
    In the application of Strichartz estimates, it is sufficient to prove them only for the optimal wave admissible pairs, For the proof, see \cite{Miao2005}.
\end{remark}

To study the well-posedness of nonlinear wave equations in $\dot{H}^{s+1}\times \dot{H}^s$(with $s\in(0,1]$) we need to introduce the following two lemmas, both taken from \cite{Guzmán2016}.  
\begin{lemma}(Fractional product rule)  \label{Fractional}
    let $s\in (0,1]$ and $1<r,r_1,r_2,p_1,p_2<\infty$, satisfy $\frac{1}{r}=\frac{1}{r_i}+\frac{1}{p_i}$, for $\forall$ i=1,2. Then, 
    $$\|D^s(fg)\|_{L^r}\le c\|f\|_{L^{r_1}}\|D^sg\|_{L^{p_1}}+c\|D^sf\|_{L^{r_2}}\|g\|_{L^{p_2}}$$.
\end{lemma}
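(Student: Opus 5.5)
The plan is to prove the estimate by a Littlewood--Paley paraproduct decomposition. Since $1<r<\infty$, the norm $\|D^s h\|_{L^r}$ is equivalent to the $L^r$ norm of the square function $\big(\sum_N N^{2s}|P_N h|^2\big)^{1/2}$, that is, to $\|h\|_{\dot F^s_{r,2}}$. We will freely assume that $f,g$ are Schwartz functions and conclude by density. Write $P_{<N}=\sum_{M<N}P_M$ and split
\[
fg=\sum_N P_{<N/8}f\,P_N g+\sum_N P_N f\,P_{<N/8}g+\sum_{N}\sum_{N/8\le M\le 8N}P_N f\,P_M g=:\Pi_1+\Pi_2+\Pi_3 .
\]
We will show $\Pi_1$ is controlled by $\|f\|_{L^{r_1}}\|D^sg\|_{L^{p_1}}$, $\Pi_2$ by $\|D^sf\|_{L^{r_2}}\|g\|_{L^{p_2}}$, and $\Pi_3$ by either bound.

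For $\Pi_1$, each summand has Fourier support in an annulus $|\xi|\sim N$, so only $P_K$ with $K\sim N$ sees it. Hence the square function of $D^s\Pi_1$ is bounded by a constant times
\[
\Big(\sum_N N^{2s}|P_{<N/8}f|^2|P_Ng|^2\Big)^{1/2}.
\]
To justify this, we use the standard lemma that $\|\sum_N F_N\|_{\dot F^s_{r,2}}\lesssim\|(\sum_N N^{2s}|F_N|^2)^{1/2}\|_{L^r}$ when $\operatorname{supp}\widehat{F_N}\subset\{|\xi|\sim N\}$. This lemma follows from the Fefferman--Stein vector-valued maximal inequality. We then bound pointwise $|P_{<N/8}f|\le C\,Mf$, where $M$ is the Hardy--Littlewood maximal function. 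Applying H\"older with $\frac1r=\frac1{r_1}+\frac1{p_1}$ and the $L^{r_1}$ boundedness of $M$ (valid since $r_1>1$) gives $\|f\|_{L^{r_1}}\|D^sg\|_{L^{p_1}}$. The term $\Pi_2$ is handled identically with the roles of $f$ and $g$ exchanged, which yields $\|D^sf\|_{L^{r_2}}\|g\|_{L^{p_2}}$.

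The main obstacle is the resonant term $\Pi_3$. There each product $P_Nf\,P_Mg$ with $M\sim N$ has Fourier support only in a ball $|\xi|\lesssim N$, not in an annulus, so the output frequencies $K$ can be much smaller than $N$. We first apply $P_K$ with $K\lesssim N$ and write
\[
K^s|P_K\Pi_3|\lesssim\sum_{N\gtrsim K}(K/N)^s\,M\big(N^s P_Nf\,\tilde P_Ng\big),
\]
where $\tilde P_N=\sum_{M\sim N}P_M$. Because $s>0$, the factor $(K/N)^s$ is summable, and Schur's test in $\ell^2$ then gives
\[
\Big(\sum_K K^{2s}|P_K\Pi_3|^2\Big)^{1/2}\lesssim\Big(\sum_N \big[M(N^sP_Nf\,\tilde P_Ng)\big]^2\Big)^{1/2}.
\]
Next, Fefferman--Stein removes the outer maximal function. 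Finally we use $|\tilde P_Ng|\lesssim Mg$ together with H\"older in the exponents $(r_2,p_2)$ to obtain $\|D^sf\|_{L^{r_2}}\|g\|_{L^{p_2}}$. Note that only $s>0$ is genuinely used, through the convergence of $\sum_{N\gtrsim K}(K/N)^s$; the restriction $s\le1$ is harmless. The strict conditions $1<r,r_i,p_i<\infty$ are exactly what the square-function characterization and the maximal inequalities require.
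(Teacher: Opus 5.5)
The paper gives no proof of this lemma. It quotes it from Guzm\'an's work as the classical fractional Leibniz (Kato--Ponce, Christ--Weinstein) rule, so your proposal supplies an argument the paper never writes out. Your argument is correct.

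\textbf{Correctness.} The three-way splitting into the two low--high paraproducts and the high--high resonant sum is exhaustive. $\Pi_1$ and $\Pi_2$ are handled properly by:
\begin{itemize}
\item the annular-support lemma;
\item the pointwise bound $|P_{<N/8}f|\lesssim Mf$;
\item H\"older's inequality;
\item the Littlewood--Paley characterization of $\|D^s\cdot\|_{L^p}$.
\end{itemize}
For $\Pi_3$, the argument closes as follows:
\begin{itemize}
\item the bound $K^s|P_K(P_Nf\,\tilde P_Ng)|\lesssim (K/N)^s M(N^sP_Nf\,\tilde P_Ng)$, with the left side nonzero only when $K\lesssim N$;
\item Schur's test in $\ell^2$, which is exactly where $s>0$ is used;
\item the Fefferman--Stein inequality;
\item the pointwise bound $|\tilde P_Ng|\lesssim Mg$.
\end{itemize}

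\textbf{Wording point.} The square function of $D^s\Pi_1$ is not bounded \emph{pointwise} by $(\sum_N N^{2s}|P_{<N/8}f|^2|P_Ng|^2)^{1/2}$. The inequality holds only after taking $L^r$ norms, which is precisely what the lemma you then invoke asserts, so state it that way.

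\textbf{Comparison with the citation.} Your route requires the identification $\dot H^{s,r}=\dot F^s_{r,2}$ and the vector-valued maximal inequality. In return it is self-contained and more general. As you observe, it works verbatim for every $s>0$, so neither the restriction $s\le 1$ nor a separate treatment of $s=1$ is needed.

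\textbf{Slight overstatement.} The undifferentiated factors enter only through $\|Mf\|_{L^{r_1}}$ and $\|Mg\|_{L^{p_2}}$, so the a priori argument never uses $r_1<\infty$ or $p_2<\infty$. Your closing remark that all the strict conditions $1<r_i,p_i<\infty$ are ``exactly'' what the tools require therefore overstates things slightly. Only $r,p_1,r_2\in(1,\infty)$ and $r_1,p_2>1$ are really used.

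The paper applies the lemma only with $0<s<b-\frac12<1$, so either the citation or your proof is enough for its purposes.
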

\begin{lemma}(Fractional chain rule) \label{chain}
    Assume that $G\in C^1(\mathbb{C})$, $s\in (0,1]$, and $1<r,r_1,r_2<\infty$, satisfy $\frac{1}{r}=\frac{1}{r_1}+\frac{1}{r_2}$. Then,
    $$\|D^sG(u)\|_{L^r}\lesssim \|G^{\prime}(u)\|_{L^{r_1}}\|D^su\|_{L^{r_2}}$$ 
\end{lemma}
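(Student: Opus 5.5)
The statement to prove is Lemma \ref{chain}. I would reduce it to a pointwise bound for difference quotients and then integrate using Hölder's inequality and maximal function estimates. The two endpoint cases are handled separately.

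\emph{Case $s=1$.} On $L^p$ with $1<p<\infty$, boundedness of the Riesz transforms gives $\|Df\|_{L^p}\simeq\|\nabla f\|_{L^p}$. The classical chain rule gives $\nabla G(u)=G'(u)\nabla u$, understood for complex $u$ as $\partial_zG\,\nabla u+\partial_{\bar z}G\,\nabla\bar u$. Hölder's inequality with $\frac1r=\frac1{r_1}+\frac1{r_2}$ then yields the claim directly.

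\emph{Case $0<s<1$.} I would use Strichartz's difference characterization: for $1<p<\infty$,
\[
\|D^sf\|_{L^p}\simeq\Big\|\Big(\int_0^\infty\Big(\rho^{-s}\int_{|y|<1}|f(x+\rho y)-f(x)|\,dy\Big)^2\frac{d\rho}{\rho}\Big)^{1/2}\Big\|_{L^p_x}=:\|\mathcal D_sf\|_{L^p}.
\]
Apply it with $f=G(u)$ and use the mean value bound
\[
|G(a)-G(b)|\le\big(|G'(a)|+|G'(b)|\big)|a-b|.
\]
This bound holds for the power-type nonlinearities used later, where $|G'|$ is comparable along the segment joining $a$ and $b$. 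In the general $C^1$ case I would instead write $G(a)-G(b)$ as an integral of $G'$ along the segment and bound this average by a maximal function.

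With this bound the inner average splits into two terms.

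\textbf{First term.} This is $|G'(u(x))|\int_{|y|<1}|u(x+\rho y)-u(x)|\,dy$. It is bounded pointwise by $|G'(u(x))|\,\mathcal D_su(x)$, so Hölder's inequality gives $\|G'(u)\|_{L^{r_1}}\|D^su\|_{L^{r_2}}$.

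\textbf{Second term.} This is $\int_{|y|<1}|G'(u(x+\rho y))|\,|u(x+\rho y)-u(x)|\,dy$, and it is the main obstacle, because the weight $G'(u)$ is evaluated at the shifted point. I would pick $1<a<r_1$ and apply Hölder in $y$. This bounds the term by $\big(M(|G'(u)|^{a})(x)\big)^{1/a}$ times an $L^{a'}$-average in $y$ of the difference of $u$.

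\begin{itemize}
\item The factor $\big(M|G'(u)|^a\big)^{1/a}$ lies in $L^{r_1}$ with norm $\lesssim\|G'(u)\|_{L^{r_1}}$, because $M$ is bounded on $L^{r_1/a}$.
\item For the difference factor, I would use the variant of the Strichartz characterization in which the $L^1$ average over $|y|<1$ is replaced by an $L^{a'}$ average. This is still equivalent to $\|D^su\|_{L^{r_2}}$ in the admissible range of $a'$, which I expect to be the step needing the most care. If this variant is unavailable for the required $a'$, I would fall back on a Littlewood–Paley proof. There one splits $P_N G(u)$ into low and high frequencies of $u$ and controls the commutator-type terms by $M(G'(u))$ times the vector-valued maximal function of $\{N^sP_Nu\|\}$, using the Fefferman–Stein inequality.
\end{itemize}

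\textbf{Conclusion.} Combining the two terms and applying Hölder's inequality in $x$ with $\frac1r=\frac1{r_1}+\frac1{r_2}$ gives
\[
\|D^sG(u)\|_{L^r}\lesssim\|G'(u)\|_{L^{r_1}}\|D^su\|_{L^{r_2}}.
\]
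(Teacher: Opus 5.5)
\textbf{There is a genuine gap, and it cannot be closed as the lemma is stated.} The step that fails is your fallback for general $G\in C^1(\mathbb{C})$: writing $G(a)-G(b)=(a-b)\int_0^1G'(b+\tau(a-b))\,d\tau$ and bounding the $\tau$-average by a maximal function of $G'(u)$. The segment from $u(x)$ to $u(x+\rho y)$ passes through values that $u$ may take only on a set of tiny measure, so no quantity built from $G'(u)$ controls $G'$ there.

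In fact the lemma is false for general $C^1$ nonlinearities whenever $s<1/r_2$. Take $G(z)=g(\operatorname{Re}z)$ with $g(t)=3t^2-2t^3$, so that $g(0)=0$, $g(1)=1$, $g'(0)=g'(1)=0$. Let $B$ be the unit ball and $u_\epsilon=\eta_\epsilon*\mathbf{1}_B$ for a standard mollifier supported in $B(0,\epsilon)$. Then $u_\epsilon\in C_c^\infty$ takes values in $[0,1]$ and equals $0$ or $1$ outside the shell $\{1-\epsilon\le|x|\le1+\epsilon\}$. Hence $\|G'(u_\epsilon)\|_{L^{r_1}}\lesssim\epsilon^{1/r_1}$. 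Also $\|D^su_\epsilon\|_{L^{r_2}}=\|\eta_\epsilon*D^s\mathbf{1}_B\|_{L^{r_2}}\le\|D^s\mathbf{1}_B\|_{L^{r_2}}<\infty$, since $sr_2<1$. So the right-hand side tends to $0$. On the other hand $G(u_\epsilon)\to\mathbf{1}_B$ in $L^2$. By lower semicontinuity of the $L^r$ norm under distributional convergence, $\liminf_{\epsilon\to0}\|D^sG(u_\epsilon)\|_{L^r}\ge\|D^s\mathbf{1}_B\|_{L^r}>0$, and this is finite because $s<1/r_2<1/r$.

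\textbf{What your argument does prove.} It establishes the correct version of the lemma, so the fix is to change the hypothesis, not the proof. Your case $s=1$ is fine. For $0<s<1$, your two-term estimate is the classical Christ--Weinstein / Kenig--Ponce--Vega argument:
\begin{itemize}
\item a pointwise bound for the term with $G'(u(x))$;
\item H\"older in $y$ with $1<a<r_1$ and boundedness of $M$ on $L^{r_1/a}$ for the shifted term;
\item an $L^{a'}$-average square function for $u$.
\end{itemize}
It works under a structural assumption such as $|G'(\tau z+(1-\tau)w)|\le\mu(\tau)\,(H(z)+H(w))$ with $\mu\in L^1([0,1])$, which yields $\|H(u)\|_{L^{r_1}}\|D^su\|_{L^{r_2}}$ on the right.

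For $G(z)=|z|^\alpha z$ you may take $H\simeq|G'|$. Along a segment through $0$, $|G'|$ is dominated by its endpoint values rather than comparable to them, which is all you need. This is the only nonlinearity to which the lemma is applied, in the proof of Lemma~\ref{nonlinear2}. The paper itself gives no proof here; it quotes the statement from Guzm\'an's paper.

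So state and prove the lemma under that hypothesis, or just for power nonlinearities. Within that setting, the remaining point to check is the range of validity of the $L^{a'}$ variant of Strichartz's characterization. You have room there, since $a$ may be taken close to $r_1$.
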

\subsection{Some notations and remarks}
Throughout this paper, $C$ is a postive finite constant which is independent of  the essential variables. $ A\lesssim B$ mean $A\leq CB$ for some constant. If both $A\lesssim B$ and $B\lesssim A,$ then we donate $A\sim B.$  We will use $ X\hookrightarrow Y$ to mean a continuous embedding, that, for two function spaces $X,Y,$  there an inclusion map $X\rightarrow Y$ with $\|f\|_Y\lesssim\|f\|_X.$
\section{the proof of the main theory: part 1}
Prior to the proof of the main theorem, we state a key lemma regarding the estimation of the nonlinearity.
\begin{lemma} \label{nonlinear}
    Given $0 < \alpha < \frac{4-2b}{3}$ and $0 < b < 2$, we have the following estimate for the nonlinear term:
\begin{equation}\label{nonlinear estimate}
\||x|^{-b} |u|^{\alpha} u\|_{L^1(I; L^2)} \lesssim (T^{\theta_1} + T^{\theta_2}) \|u\|^{\alpha+1}_{W(I; L^2)}
\end{equation}
where $I = [0, T]$, and $\theta_1, \theta_2 > 0$.
\end{lemma}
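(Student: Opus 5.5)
The plan is to split the weight spatially. Write $\mathbb{R}^3 = B\cup B^c$ with $B=\{|x|\le 1\}$, and bound $\||x|^{-b}|u|^\alpha u\|_{L^2}$ separately on each piece. On each piece we use H\"older in $x$ to put the weight in a Lebesgue space and $u$ in some $L^r_x$. We then use H\"older in $t$ on $I=[0,T]$ to gain a power of $T$, and close with a norm $\|u\|_{L^q(I;L^r)}$ for an admissible pair $(q,r)\in\mathcal{A}$, which is dominated by $\|u\|_{W(I;L^2)}$. In $\mathbb{R}^3$ the admissibility condition reads $\frac1q=\frac12-\frac1r$ with $2\le r<\infty$. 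The two regions will in general use different pairs, and this is where the two exponents $\theta_1,\theta_2$ come from.

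\emph{Region $B^c$.} Here $|x|^{-b}\le 1$, so it suffices to bound $\||u|^{\alpha+1}\|_{L^1_tL^2_x}=\|u\|_{L^{\alpha+1}_tL^{2(\alpha+1)}_x}^{\alpha+1}$. Take $r=2(\alpha+1)$, so that $\frac1q=\frac{\alpha}{2(\alpha+1)}$. H\"older in time then gives
\[
\|u\|_{L^{\alpha+1}_tL^r_x}^{\alpha+1}\le T^{1-\frac{\alpha+1}{q}}\|u\|_{L^q_tL^r_x}^{\alpha+1}=T^{1-\frac{\alpha}{2}}\|u\|_{L^q_tL^r_x}^{\alpha+1}.
\]
Since $\alpha<\frac{4-2b}{3}<2$, the exponent $\theta_2=1-\frac\alpha2$ is positive.

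\emph{Region $B$.} We write
\[
\||x|^{-b}|u|^\alpha u\|_{L^2(B)}\le \||x|^{-b}\|_{L^\gamma(B)}\|u\|_{L^r}^{\alpha+1}, \qquad \frac12=\frac1\gamma+\frac{\alpha+1}{r}.
\]
The weight factor is finite iff $\frac3\gamma>b$, which is equivalent to $\frac{\alpha+1}{r}<\frac12-\frac b3$. We therefore pick $r<\infty$ large enough for this to hold, and apply time H\"older with the corresponding admissible $q$. This requires
\[
\frac{\alpha+1}{q}=(\alpha+1)\Big(\frac12-\frac1r\Big)<1,
\]
and then $\theta_1=1-\frac{\alpha+1}{q}$. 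Choosing $\frac{\alpha+1}{r}$ just below $\frac{3-2b}{6}$, the time condition becomes $\alpha+1<2+\frac{3-2b}{3}$, which is implied by the hypothesis $\alpha<\frac{4-2b}{3}$.

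\emph{Main obstacle.} The delicate step is the local region $B$ when $b$ is large. The integrability condition $\frac{\alpha+1}{r}<\frac12-\frac b3$ is only solvable for $b<\frac32$, since $|x|^{-b}\notin L^2_{\mathrm{loc}}$ otherwise. For $\frac32\le b<2$ one cannot use pure $L^r$-norms of $u$. The first remedy I would try is to exploit the $\dot H^1$-level Strichartz control implicit in $W$. Concretely, a Hardy/Sobolev-type inequality such as $\||x|^{-1}u\|_{L^2}\lesssim\|\nabla u\|_{L^2}$ would absorb part of the weight, so that the nonlinearity is estimated as $\||x|^{-1}u\|\cdot\||x|^{1-b}|u|^\alpha\|$. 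The remaining weight $|x|^{1-b}$ is then locally integrable to a high enough power, and one would redo the exponent bookkeeping above with this extra factor. Verifying that this bookkeeping still yields $\theta_1>0$ exactly under $\alpha<\frac{4-2b}{3}$ is the step I expect to require the most care.
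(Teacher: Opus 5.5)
Your decomposition is the same as the paper's: split at $|x|=1$, drop the weight on $B^c$, then apply H\"older in $x$ and in $t$. However, two steps do not hold up.

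\textbf{The norm in the exterior region.} You use $\|u\|_{L^q(I;L^r)}\le\|u\|_{W(I;L^2)}$ for $(q,r)\in\mathcal A$. In the paper, $\|u\|_{W(I;L^2)}=\sup_{(q,r)\in\mathcal A}\|u\|_{L^q(I;L^{3r})}$. This is the $\dot H^1$-level family $\frac1q+\frac3p=\frac12$ with $p=3r\ge 6$, which is what Strichartz gives for $\dot H^1\times L^2$ data. Your family contains $L^\infty_tL^2_x$, which such data do not control.

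With the correct norm, your exterior step needs $L^{2(\alpha+1)}_x$ with $2(\alpha+1)<6$, and that norm is not available. No other choice of exponents repairs this once the weight is dropped. Take $u(t,x)=g(x/R)$ with $0\ne g\in C_c^\infty(\{1<|y|<2\})$ and $R\ge\max(1,T)$. Then $\||u|^{\alpha+1}\|_{L^1(I;L^2)}\sim_T R^{3/2}$ and $\||x|^{-b}|u|^\alpha u\|_{L^1(I;L^2)}\sim_T R^{3/2-b}$. On the other hand $\|u\|_{W(I;L^2)}^{\alpha+1}\lesssim R^{(\alpha+1)/2}$, since $T^{\frac12-\frac3p}R^{\frac3p}\le R^{\frac12}$. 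So the unweighted exterior bound fails for every $\alpha<2$. The lemma itself fails when $\alpha<2-2b$, for instance for every admissible $\alpha$ when $b\le\frac12$.

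The paper's proof has the same defect. Its pair $\bigl(\frac{2(\alpha+1)}{\alpha-2},\frac{2(\alpha+1)}{3}\bigr)$ has a negative first entry and a second entry below $2$, because $\alpha<2$.

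On $B^c$ you have to keep the weight. For example, use $\||x|^{-b}\|_{L^{\gamma}(B^c)}\|u\|_{L^\infty(I;L^6)}^{\alpha+1}$ with $\frac1\gamma=\frac{2-\alpha}{6}$. This is finite exactly when $\alpha>2-2b$, and it gives $\theta=1$. Your interior computation does survive with $p=3r$. You need $\frac{\alpha-1}{6}<\frac{\alpha+1}{p}<\frac{3-2b}{6}$ with $p\ge 6$, which is solvable precisely when $b<\frac32$ and $\alpha<4-2b$.

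\textbf{The range $\frac32\le b<2$.} Here you give no argument, and the Hardy remedy cannot supply one. The quantity $\|\nabla u\|_{L^2}$ is not part of $\|u\|_{W(I;L^2)}$. Moreover, pairing $|x|^{-1}u\in L^2$ with $|x|^{1-b}|u|^\alpha$ lands in $L^2$ only if the latter factor is in $L^\infty$, which fails for $b>1$.

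More decisively, the estimate is false in this range. Since $|x|^{-b}\notin L^2(B)$, the function $u(t,x)=\chi(x)$ with $\chi\in C_c^\infty(\mathbb{R}^3)$ and $\chi(0)\ne0$ has finite $W(I;L^2)$-norm but an infinite left-hand side. The paper's proof also tacitly needs $2<\gamma<3/b$, i.e.\ $b<\frac32$.

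So the obstacle you flagged is a counterexample, not a technicality. What your method can prove (with the weight kept on $B^c$) is the estimate for $b<\frac32$ and $\max(0,2-2b)<\alpha<\frac{4-2b}{3}$.
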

\begin{proof}
    \begin{equation}\label{3.2}
    \begin{aligned}
         \||x|^{-b}|u|^{\alpha}u\|_{L^2}&\le \||x|^{-b}|u|^{\alpha}u\|_{L^2(B)}+\||x|^{-b}|u|^{\alpha}u\|_{L^2(B^c)} \\
         &\le \||x|^{-b}|u|^{\alpha}u\|_{L^2(B)}+\||u|^{\alpha}u\|_{L^2}
    \end{aligned}
    \end{equation}
    $\||u|^{\alpha}u\|_{L^2}=\|u\|_{2(\alpha+1)}^{\alpha+1} $, therefore, by Hölder's inequality,
    \begin{equation}\label{3.3}
      \|\|u\|_{2(\alpha+1)}^{\alpha+1}\|_{L^1}\lesssim T^{\frac{4-\alpha}{2}}\|u\|^{\alpha+1}_{L^{\frac{2(\alpha+1)}{\alpha-2}}(I, L^{2(\alpha+1)})} \le T^{\theta_1}\|u\|_{W(L^2; I)}^{\alpha+1},   
    \end{equation}
    where $\theta_1=\frac{4-\alpha}{2}.$
    One can readily verify that $(\frac{2(\alpha+1)}{\alpha-2}, \frac{2(\alpha+1)}{3})$ constitutes an optimal wave-admissible pair. Meanwhile
    \begin{equation}\label{3.4}
        \||x|^{-b}|u|^{\alpha}u\|_{L^2(B)}\le \||x|^{-b}\|_{L^{\gamma}(B)}\||u|^{\alpha}u\|_{L^{\frac{2\gamma}{\gamma-2}}}\le \||x|^{-b}\|_{L^{\gamma}(B)}\|u\|^{\alpha+1}_{L^{\frac{2\gamma(\alpha+1)}{\gamma-2}}}.
    \end{equation}
    So
    \begin{equation} \label{3.5}
      \|\|u\|^{\alpha+1}_{L^{\frac{2\gamma(\alpha+1)}{\gamma-2}}}\|_{L^1(I)}\lesssim T^{\frac{\alpha\gamma+4\gamma-6}{2\gamma(\alpha+1)}}\|u\|_{L^{\frac{2\gamma(\alpha+1)}{(\alpha-2)\gamma+6}}(I, L^{\frac{2\gamma(\alpha+1)}{\gamma-2}})}\le T^{\theta_2}\|u\|_{W(L^2; I)}  
    \end{equation}
    where $\theta_2=\frac{\alpha\gamma+4\gamma-6}{2\gamma(\alpha+1)}$, It is also easy to verify that $\left( \frac{2\gamma(\alpha+1)}{(\alpha-2)\gamma + 6}, \frac{2\gamma(\alpha+1)}{3(\gamma - 2)} \right)$ is an optimal wave-admissible pair. From \eqref{3.2}, \eqref{3.3}, \eqref{3.4}, and \eqref{3.5}, we derive \eqref{nonlinear estimate}, which concludes the proof of Lemma \ref{nonlinear}.
\end{proof}
\begin{remark}
    Indeed, there exists a more general expression, 
\begin{equation}
     \||x|^{-b}|u|^{\alpha}v\|_{L^1(I;L^2)}\lesssim (T^{\theta_1}+T^{\theta_2})\|u\|^{\alpha}_{W(I;L^2)}\|v\|_{W(I;L^2)},
\end{equation}
The proof is similar. Hence, we omit the proof here.
\end{remark} 

Armed with this lemma, we are now in a position to prove Theorem \ref{local well-posedness L^2}, 
{
\renewcommand{\proofname}{Proof of Theorem \ref{local well-posedness L^2}}
\begin{proof}
    By virtue of the time reversal symmetry of the wave equation, it suffices to consider positive time. We begin by defining
    \begin{equation}
        X=C([0,T]; \dot{H}^1(\mathbb{R}^3))\cap C^1([0,T];L^2(\mathbb{R}^3))\cap W([0,T];L^2(\mathbb{R}^3))
    \end{equation}
    and we fix
    $$B(a,T)=\{u\in X; \|u\|_{W([0,T];L^2)}\le a \}, $$
    Here, $a$ and $T$ are two undetermined positive constants. We define the mapping
    \begin{equation}
        H(u)(t)=\dot{K}(t)\varphi + K(t)\psi
+ \int_0^t K(t-\tau)[|x|^{-b}|u|^{\alpha}u](\tau) \mathrm{d}\tau
    \end{equation}
    We now demonstrate that this map is a contraction mapping on $B(a,T)$.
    By applying the Strichartz estimates (Theorem \ref{strichartz}) in conjunction with Lemma \ref{nonlinear}, we have
    \begin{equation*}
    \begin{aligned}
                \|H(u)\|_{L^q(I; L^{3r})}&\le \|\dot{K}(t)\varphi + K(t)\psi\|_{L^q(I;L^{3r})}+ \|\int_0^t K(t-\tau)[|x|^{-b}|u|^{\alpha}u](\tau) \mathrm{d}\tau\|_{L^q(I;L^{3r})}\\
                &\le \|\dot{K}(t)\varphi + K(t)\psi\|_{L^q(I;B^{\frac{2}{r}}_{r,2})}+\|\int_0^t K(t-\tau)[|x|^{-b}|u|^{\alpha}u](\tau) \mathrm{d}\tau\|_{L^q(I;B^{\frac{2}{r}}_{r,2})}\\
                &\lesssim\|\varphi\|_{\dot{H}^1}+\|\psi\|_{L^2}+\||x|^{-b}|u|^{\alpha}u\|_{L^1(I;L^2)} \\
                &\lesssim \|\varphi\|_{\dot{H}^1}+\|\psi\|_{L^2}+(T^{\theta_1}+T^{\theta_2})\|u\|_{W(I;L^2)}^{\alpha+1}.
    \end{aligned}
    \end{equation*}  
    Taking the supremum over all optimal wave-admissible pairs on both sides yields, $$\|H(u)\|_{W(I;L^2)}\lesssim \|\varphi\|_{\dot{H}^1}+\|\psi\|_{L^2}+(T^{\theta_1}+T^{\theta_2})a^{\alpha+1}.$$
    Choosing $a = 2c\|u_0\|_{L^2}$ and taking $T$ sufficiently small so that
    $$ca^{\alpha}(T^{\theta_1}+T^{\theta_2})\le \frac{1}{2}, $$ we obtain that $H$ is a self-map on $B(a,T)$. 
    It now suffices to prove that $H$ is a contraction mapping. 
   Applying the Strichartz estimates once more, for any wave-admissible pair $(q, r)$, we have
    \begin{equation}
    \begin{aligned}
           \|H(u)-H(v)\|_{L^q(I;L^{3r})}&\lesssim \||x|^{-b}(|u|^{\alpha}u-|v|^{\alpha}v)\|_{L^1(I;L^2)}\\
           &\lesssim \||x|^{-b}|u|^{\alpha}|u-v||\|_{L^1(I;L^2)}+\||x|^{-b}|v|^{\alpha}|u-v|\|_{L^1(I;L^2)}\\
           &\lesssim(T^{\theta_1}+T^{\theta_2})(\|u\|^{\alpha}_{W(I;L^2)}+\|v\|^{\alpha}_{W(I;L^2)})\|u-v\|_{W(I;L^2)}\\
           &\lesssim (T^{\theta_1}+T^{\theta_2})a^{\alpha}\|u-v\|_{W(I;L^2)},
    \end{aligned}
    \end{equation}
    where $u, v\in B(a,T)$. Similarly, we can choose $a$ and $T$ such that the preceding coefficient is less than $\frac{1}{2}$.
    Thus we obtain
    \begin{equation}
           \|H(u)-H(v)\|_{W(I;L^2)}
           \lesssim \frac{1}{2}\|u-v\|_{W(I;L^2)},
    \end{equation}
    That is, $H$ is a contraction mapping on $B(a,T)$, which completes the proof of the theorem.
\end{proof}
}
{
\renewcommand{\proofname}{Proof of Theorem \ref{global well-posedness L^2}}
\begin{proof}
    From local well-posedness together with energy conservation, we deduce global well-posedness in the sense of small initial data. In fact, the Gagliardo–Nirenberg interpolation inequality yields
    \begin{equation*}
    \begin{aligned}
          E(u(0))=&\|\psi\|^2_{L^2(\mathbb{R}^3)}+\|\nabla\varphi\|^2_{L^2(\mathbb{R}^3)}+\frac{1}{p+1}\|\varphi\|^{p+1}_{L^{p+1}(\mathbb{R}^3)}\\
        &\lesssim \|\psi\|^2_{L^2(\mathbb{R}^3)}+\|\nabla\varphi\|^2_{L^2(\mathbb{R}^3)}+\|\nabla\varphi\|_{L^2(\mathbb{R}^3))}^2\|\varphi\|_{L^{\frac{3(p-1)}{2}}}^{p-1} \\
        &\lesssim \|\psi\|^2_{L^2(\mathbb{R}^3)}+\|\nabla\varphi\|^2_{L^2(\mathbb{R}^3)}+\|\nabla\varphi\|_{L^2(\mathbb{R}^3))}^2\|\varphi\|_{\dot{H}^{\frac{3}{2}-\frac{2}{p-1}}}^{p-1},
    \end{aligned}  
    \end{equation*}
    thus
    \begin{equation}
    E(u(t))\lesssim\|\psi\|^2_{L^2(\mathbb{R}^3)}+\|\nabla\varphi\|^2_{L^2(\mathbb{R}^3)}+\|\nabla\varphi\|_{L^2(\mathbb{R}^3))}^2\|\varphi\|_{\dot{H}^{\frac{3}{2}-\frac{2}{p-1}}}^{p-1}\lesssim \|\psi\|^2_{L^2(\mathbb{R}^3)}+\|\nabla\varphi\|^2_{L^2(\mathbb{R}^3)}
    \end{equation}
    holds for all $t$, and for any $\psi \in L^2$ and $\varphi \in \dot{H}^1 \cap \dot{H}^{\frac{3}{2} - \frac{2}{p-1}}$. By a density argument, it follows that $E(u)$ has an a priori bound for any initial data in $H^1 \times L^2$. Then, by the blow-up alternative, we conclude that $u$ is globally well-posed. This completes the proof, which is similar to that in \cite{Dodson2024}.
\end{proof}
}
\section{the proof of the main theory: part 2}
We now focus on establishing the well-posedness in the space $H^{s+1}\times H^s$. The core of the proof still lies in estimating the nonlinear terms. Before presenting the estimates, we have the following remark., 
\begin{remark}
    Under the conditions of Lemma \ref{nonlinear2} below, we assert that $D^s(|x|^{-b})=C_{N,b}|x|^{-b-s}$, for the proof, see \cite{Guzmán2016}.
\end{remark} 

The lemma for estimating the nonlinear term is given below.
\begin{lemma} \label{nonlinear2}
    Let $\frac{1}{2}<b<\frac{3}{2}$, if $0<s<b-\frac{1}{2}$ and $0<\alpha<\frac{4-2b}{3-2s}$, then we have the following estimate for the nonlinear term:
    \begin{equation} \label{4.1}
        \|D^s(|x|^{-b}|u|^{\alpha}u)\|_{L^1(I; L^2)}\lesssim(T^{\theta_1}+T^{\theta_2})\|u\|_{W(I; \dot{H}^s)}^{\alpha+1}.
    \end{equation}
    Here $I=[0,T]$, and $\theta_1,\theta_2>0$. 
\end{lemma}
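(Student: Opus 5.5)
I will follow the same strategy as in Lemma \ref{nonlinear}: split $\mathbb{R}^3$ into the unit ball $B$ and its complement $B^c$, estimate each piece pointwise in time in $L^2$, and then integrate in time using Hölder's inequality to extract a positive power of $T$. The right-hand side is then bounded by Strichartz norms $L^q_t L^{r}_x$ taken over optimal wave-admissible pairs, so that it is controlled by $\|u\|_{W(I;\dot H^s)}$. On each region I will apply the fractional product rule (Lemma \ref{Fractional}) to the product $|x|^{-b}\cdot |u|^\alpha u$. This produces two terms:
\[
\|D^s(|x|^{-b}|u|^\alpha u)\|_{L^2(A)}\lesssim \||x|^{-b}\|_{L^{\gamma_1}(A)}\|D^s(|u|^\alpha u)\|_{L^{p_1}}+\||x|^{-b-s}\|_{L^{\gamma_2}(A)}\||u|^\alpha u\|_{L^{p_2}},
\]
where $A\in\{B,B^c\}$ and $\frac12=\frac1{\gamma_i}+\frac1{p_i}$. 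For the second term I use the remark that $D^s|x|^{-b}=C|x|^{-b-s}$.

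For the first term I apply the fractional chain rule (Lemma \ref{chain}) with $G(u)=|u|^\alpha u$. This gives $\|D^s(|u|^\alpha u)\|_{L^{p_1}}\lesssim \|u\|_{L^{m}}^{\alpha}\|D^s u\|_{L^{k}}$. The $L^m$ norm I then convert, via Sobolev embedding (the Besov embedding lemma), into $\|D^s u\|_{L^{\tilde m}}$ with $\frac1m=\frac1{\tilde m}-\frac s3$. The second term is handled the same way: $\||u|^\alpha u\|_{L^{p_2}}=\|u\|^{\alpha+1}_{L^{(\alpha+1)p_2}}$, and each factor is bounded by Sobolev embedding in terms of $D^s u$.

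The integrability conditions fix the exponents. On $B$ we need $\gamma_1 b<3$ and $\gamma_2(b+s)<3$. On $B^c$ we need the reverse inequalities $\gamma_i b>3$, respectively $\gamma_i(b+s)>3$ (or $\gamma=\infty$ where allowed). In time, Hölder gives $\|F\|_{L^1_t}\le T^{\theta}\|\cdot\|_{L^{q}_t}$ with $\theta=1-\frac{\alpha+1}{q}>0$, provided the temporal exponents of the chosen admissible pairs are large enough. This is the analogue of choosing $\gamma$ in the proof of Lemma \ref{nonlinear}.

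The main obstacle is the exponent bookkeeping. I must simultaneously satisfy:
\begin{itemize}
\item the spatial Hölder relations;
\item the Sobolev shifts by $s/3$;
\item the optimal wave-admissibility $\frac2q=\frac12-\frac1r$ (with the paper's $L^{3r}$ normalization, in $\mathbb{R}^3$);
\item the integrability of $|x|^{-b}$ and $|x|^{-b-s}$ on $B$ and $B^c$;
\item the positivity of $\theta_1,\theta_2$.
\end{itemize}
The hypotheses are what make this consistent. The bound $\alpha<\frac{4-2b}{3-2s}$ is $\dot H^{s+1}$-subcriticality and should give $\theta>0$. The conditions $s<b-\frac12$ and $\frac12<b<\frac32$ should leave room to choose $\gamma$ with $3/(b+s)$ and $3/b$ on the correct sides relative to $2$. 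My first move is to pick $\gamma$ just slightly below $3/(b+s)$ on $B$ and very large on $B^c$. Then I solve for the admissible pairs, checking that all Lebesgue indices lie in $(1,\infty)$, as Lemmas \ref{Fractional} and \ref{chain} require. Summing the $B$ and $B^c$ contributions yields \eqref{4.1} with $\theta_1$ coming from $B^c$ and $\theta_2$ from $B$.
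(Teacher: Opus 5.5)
Your plan is the paper's plan: split at the unit ball, product rule with $D^s|x|^{-b}=C|x|^{-b-s}$, chain rule, Sobolev, H\"older in time. But the step you defer, the exponent bookkeeping, is exactly where it breaks. No choice of exponents can rescue it, because the estimate is false on part of the stated parameter range.

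On $B^c$, a very large $\gamma$ forces $\frac{\alpha}{m}+\frac1k\approx\frac12$ in $\|D^s(|u|^\alpha u)\|_{L^{p_1}}\lesssim\|u\|_{L^m}^\alpha\|D^su\|_{L^k}$. However, $W(I;\dot H^s)$ only controls $D^su$ in $L^k$ with $k\ge 6$, hence $u$ in $L^m$ with $m\ge\frac{6}{1-2s}$. The best admissible choice is $\gamma$ just above $3/b$ (resp.\ $3/(b+s)$ for the $|x|^{-b-s}$ term). Even then the H\"older relation is solvable only if $\alpha(1-2s)>2-2b$, which fails for $b<1$ and small $\alpha$. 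This is sharp. Take $u(t,x)=f(x/R)$ with $f$ a Gaussian. The left side of \eqref{4.1} equals $c\,T R^{\frac32-b-s}$ with $c>0$, while $\|u\|_{W(I;\dot H^s)}\lesssim_T R^{\frac12-s}$ for $R\ge1$. For $(b,s,\alpha)=(0.75,0.1,0.1)$, which satisfies every hypothesis, the ratio of the two sides grows like $R^{0.21}$.

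On $B$, the $|x|^{-b-s}$ term needs $2\le\gamma_2<\frac{3}{b+s}$, i.e.\ $b+s<\frac32$. The hypothesis $s<b-\frac12$ does not imply this once $b\ge1$. For $b=1.3$, $s=0.4$ and a time-independent Gaussian $u$, the Fourier transform of $|x|^{-b}|u|^\alpha u$ decays only like $|\xi|^{b-3}$. So the left side is infinite while the right side is finite, and your claim that the hypotheses ``should leave room'' is false.

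Two smaller corrections. The bound $\alpha<\frac{4-2b}{3-2s}$ is not $\dot H^{s+1}$-subcriticality; it says $s_c<s$ with $s_c=\frac32-\frac{2-b}{\alpha}$, whereas $\dot H^{s+1}$-subcriticality is $\alpha<\frac{4-2b}{1-2s}$. Also, optimal admissibility in $\mathbb{R}^3$ reads $\frac1q=\frac12-\frac1r$, not $\frac2q=\frac12-\frac1r$.

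Separately, Lemma \ref{Fractional} is a global inequality and $D^s$ is nonlocal. So you cannot bound $\|D^s(fg)\|_{L^2(A)}$ using only $\|f\|_{L^{\gamma}(A)}$ and $\|D^sf\|_{L^{\gamma}(A)}$. Split the weight rather than the domain: write $|x|^{-b}=\chi|x|^{-b}+(1-\chi)|x|^{-b}$ with $\chi$ a smooth cutoff and apply the product rule on $\mathbb{R}^3$. In place of the exact formula, use $D^s(\chi|x|^{-b})\in L^\gamma$ for $1\le\gamma<\frac{3}{b+s}$ and $D^s((1-\chi)|x|^{-b})\in L^\gamma$ for $\gamma>\frac{3}{b+s}$.

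The paper's proof takes the same route, makes the same localization, and only estimates the $B$ piece, so it does not resolve these issues either. Any correct version of the lemma needs at least $b+s<\frac32$ and $\alpha(1-2s)\ge 2-2b$ as additional hypotheses, or some inhomogeneous control of $u$ (e.g.\ at the $L^2$ level) to handle spatial infinity.
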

\begin{proof}
    We take $B=B(0,1)$, to denote the unit ball in $\mathbb{R}^n$. Then we have
    \begin{equation*}
        \|D^s[|x|^{-b}|u|^{\alpha}u]\|_{L^1(I;L^2)}\le \|D^s[|x|^{-b}|u|^{\alpha}u]\|_{L^1(I; L^2(B^c))}+\|D^s[|x|^{-b}|u|^{\alpha}u]\|_{L^1(I; L^2(B))}.
    \end{equation*}
    Next, we estimate the second term on the right-hand side. Combining the previous lemmas \ref{Fractional} and \ref{chain}, and using Hölder's inequality, we obtain
    \begin{equation*}
    \begin{aligned}
         \|D^s[|x|^{-b}|u|^{\alpha}u]\|_{L^2(B)}&\le \|D^s|x|^{-b}\|_{L^{r_1}(B)}\||u|^{\alpha}u\|_{L^{p_1}}+\||x|^{-b}\|_{L^{r_2}(B)}\|D^s[|u|^{\alpha}u]\|_{L^{p_2}}\\
         &\lesssim\|D^s|x|^{-b}\|_{L^{r_1}(B)}\|u\|_{L^{(\alpha+1)p_1}}^{\alpha+1}+\||x|^{-b}\|_{L^{r_2}(B)}\||u|^{\alpha}D^su\|_{L^{p_2}}\\
         &\lesssim \|D^s|x|^{-b}\|_{L^{r_1}(B)}\|u\|_{L^{(\alpha+1)p_1}}^{\alpha+1}+\||x|^{-b}\|_{L^{r_2}(B)}\||u|^{\alpha}\|_{L^{\frac{1+\frac{1}{\alpha}}{\frac{1}{p_2}-\frac{s}{3}}}}\|D^su\|_{L^{\frac{1+\frac{1}{\alpha}}{\frac{1}{\alpha p_2}+\frac{s}{3}}}}\\
         &\lesssim \|D^s|x|^{-b}\|_{L^{r_1}(B)}\|u\|_{L^{(\alpha+1)p_1}}^{\alpha+1}+\||x|^{-b}\|_{L^{r_2}(B)}\|D^su\|^{\alpha+1}_{L^{\frac{1+\frac{1}{\alpha}}{\frac{1}{\alpha p_2}+\frac{s}{3}}}}\\
    \end{aligned}
    \end{equation*}
    Here we use the Sobolev embedding, where we fix $p_2$ to be a number greater than $\max\{\frac{6}{\alpha+1-2s\alpha},\frac{6}{3-2b}\}$. In this case, $r_2=\frac{1}{\frac{1}{2}-\frac{1}{p_2}}<\frac{3}{b}$, and thus$\||x|^{-b}\|_{L^{\frac{1}{\frac{1}{2}-\frac{1}{p_2}}}}< \infty$. 
    Therefore, we use the Hölder's inequality to deduce that $$\|\||x|^{-b}\|_{L^{\frac{1}{\frac{1}{2}-\frac{1}{p_2}}}}\|u\|^{\alpha+1}_{\dot{H}^{s,\frac{1+\frac{1}{\alpha}}{\frac{1}{\alpha p_2}+\frac{s}{3}}}}\|_{L^1(I)}\lesssim T^{\theta_1}\|u\|_{L^{\frac{2(1+\frac{1}{\alpha})}{1+\frac{1}{\alpha}-\frac{6}{\alpha p_2}-2s}}(I,\dot{H}^{s,\frac{1+\frac{1}{\alpha}}{\frac{1}{\alpha p_2}+\frac{s}{3}}})}^{\alpha+1}, $$
    and it is easy to verify that $(\frac{2(1+\frac{1}{\alpha})}{1+\frac{1}{\alpha}-\frac{6}{\alpha p_2}-2s},\frac{1+\frac{3}{\alpha}}{\frac{1}{\alpha p_2}+s})$ is the optimal admissible pair. Hence we have
    \begin{equation} \label{4.2}
\| \||x|^{-b}\|_{L^{\frac{1}{\frac{1}{2}-\frac{1}{p_2}}}(B)}\|u\|^{\alpha+1}_{\dot{H}^{s,\frac{1+\frac{1}{\alpha}}{\frac{1}{\alpha p_2}+\frac{s}{3}}}} \|_{L^1(I)} \lesssim T^{\theta_1} \|u\|^{\alpha+1}_{W(I; \dot{H}^s)}.
\end{equation}
Similarly, we obtain
\begin{equation} \label{4.3}
\| \|D^s|x|^{-b}\|_{L^{r_1}(B)}\|u\|_{L^{(\alpha+1)p_1}}^{\alpha+1} \|_{L^1(I)} \lesssim T^{\theta_2} \|u\|^{\alpha+1}_{W(I; \dot{H}^s)} .
\end{equation}
Combining \eqref{4.2} and \eqref{4.3}, we arrive at the estimate \eqref{4.1}, thus completing the proof of the lemma. 
\end{proof}
\begin{remark}\label{REMARK2}
     Similarly, for the general nonlinear term $|x|^{-b}|u|^{\alpha}v$, the following estimate can also be made.
\begin{equation}
    \||x|^{-b}|u|^{\alpha}v\|_{L^1(I, L^2)}\lesssim (T^{\theta_1}+T^{\theta_2})\|u\|^{\alpha}_{W(I;\dot{H}^s)}\|v\|_{W(I; L^2)},
\end{equation}
The proof is completely analogous.
\end{remark}
We now begin the proof of Theorem \ref{local H^s}.
{
\renewcommand{\proofname}{Proof of Theorem \ref{local H^s}}
\begin{proof}
    By using the time symmetry of the wave equation, we only need to consider forward time. We define $X=C([0,T]; \dot{H}^{s+1})\cap C^1([0,T];\dot{H}^s)\cap W([0,T]; \dot{H}^s)$, denote $I=[0,T]$, and define
    \begin{equation*}
        \|u\|_{T}=\|u\|_{W(I;L^2)}+\|u\|_{W(I;\dot{H}^s)}.
    \end{equation*}
    Following the same notation as in the proof of Theorem \ref{local well-posedness L^2}, we define a map $H(u)$. Set $S(a,T)=\{u\in X: \|u\|_{T}\le a\}$, where $a,T$ are two positive constants to be determined, and endow $S(a,T)$ with the metric.
    \begin{equation*}
        d_T(u,v)=\|u-v\|_{W(I; L^2)}.
    \end{equation*}
   We now prove that $S(a,T)$ is a complete metric space. The proof is essentially the same as that in \cite{Cazenave2003} (see Theorem 1.2.5 and the proof of Theorem 4.4.1 on page 94). Since $S(a,T)\subset X$, and $X$ is complete, it suffices to show that $S(a,T)$ is closed in $X$. Take a sequence $u_n \in S(a,T)$, with $u_n\rightarrow u$ as $n\rightarrow \infty$, we need to verify that $u\in S(a,T)$. Because $u_n \in C([0,T];\dot{H}^{s+1}(\mathbb{R}^3))$,  the set ${u_n(t)}$ is bounded in $\dot{H}^{s+1}(\mathbb{R}^3)$ for each $t \in I$. By the reflexivity of $\dot{H}^{s+1}$, there exists a weakly convergent subsequence, still denoted by $u_n(t)$, such that $u_n(t)\rightharpoonup v(t)$ in $\dot{H}^{s+1}(\mathbb{R}^3)$, and
    \begin{equation*}
        \|v(t)\|_{\dot{H}^{s+1}}\le \liminf_{n\rightarrow \infty}{\|u_n\|_{\dot{H}^{s+1}}}\le a
    \end{equation*}
    On the other hand, by our assumption $d_T(u_n, u) \to 0$, it follows that $u_n \to u$ in $L^q(I; L^{3r})$, where $(q, r)$ is any optimal wave admissible pair. Since $(\infty, 2)$ is also an optimal wave admissible pair, we obtain $u_n \to u$ in $L^\infty(I; L^6)$. Thus, for almost every $t \in I$, we have $u_n(t) \to u(t)$ in $L^6$. Taking the space $\dot{H}^{s+1} \cap L^6$, by uniqueness of the limit in this space, we obtain $u(t) = v(t)$. Then, using a density argument and the properties of $v$ itself, we obtain $\|u(t)\|_{\dot{H}^s_x} \le a$. That is, $u \in C(I; \dot{H}^{s+1}(\mathbb{R}^3))$. By similar reasoning, we can also show that $u \in W(I; \dot{H}^s)$ and $C^1(I; \dot{H}^s)$, and furthermore $\|u\|_T \le a$, thereby completing the proof of closedness.

    Next, applying the Strichartz estimates again, we have the following two estimates:
\begin{equation*}
\begin{aligned}
\|H(u)\|_{W(I; \dot{H}^s)} &\lesssim \|\varphi\|_{\dot{H}^{s+1}}+\|\psi\|_{\dot{H}^s}+\|D^s[|x|^{-b}|u|^{\alpha}u]\|_{L^1(I; L^2)},\\
\|H(u)\|_{W(I; L^2)} &\lesssim \|\varphi\|_{\dot{H}^{1}}+\|\psi\|_{L^2}+\|[|x|^{-b}|u|^{\alpha}u]\|_{L^1(I; L^2)}.
\end{aligned}
\end{equation*}
Using Lemma \ref{nonlinear2} and Remark \ref{REMARK2}, we have
\begin{equation*}
\|H(u)\|_T \lesssim \|\varphi\|_{\dot{H}^{s+1}}+\|\psi\|_{\dot{H}^s}+(T^{\theta_1}+T^{\theta_2})a^{\alpha+1}.
\end{equation*}
Similarly, we choose $a = 2\|(\varphi,\psi)\|_{\dot{H}^1\times L^2}$ and $T>0$ such that
$$a^{\alpha}(T^{\theta_1}+T^{\theta_2})<\frac{1}{4},$$
then we obtain that $H$ is a self-mapping on $S(a,T)$.

   On the other hand, by an argument similar to the previous one, we have
\begin{equation} \label{4.7}
d_T(H(u),H(v)) \lesssim (T^{\theta_1}+T^{\theta_2})(\|u\|^\alpha_T+\|v\|^\alpha_T) d_T(u,v).
\end{equation}
Taking $u,v\in S(a,T)$, from \eqref{4.7} we obtain
$$ d_T(H(u),H(v)) \lesssim (T^{\theta_1}+T^{\theta_2}) a^\alpha d_T(u,v) \le \frac{1}{4} d_T(u,v), $$
which implies that $H$ is a contraction mapping on $S(a,T)$. Applying the Banach fixed point theorem, there exists a unique fixed point in $S(a,T)$ such that the Duhamel formula holds. This completes the proof.
\end{proof}
}

\end{document}